\newtheorem{formula}{}[section]
\newtheorem{proposition}[formula]{Proposition}
\newtheorem{corollary}[formula]{Corollary}
\newtheorem{lemma}[formula]{Lemma}
\newtheorem{theorem}[formula]{Theorem}
\theoremstyle{definition}
\newtheorem{definition}[formula]{Definition}
\theoremstyle{remark}
\newtheorem*{remark}{Remark}
\begin{document}

\title{Graded Thread Modules over the Positive Part of the Witt (Virasoro) Algebra}
\author{Dmitry V. Millionschikov}
\thanks{Supported by the RFBR grant N 16-51-55017 (Russian Foundation for Basic Research)}
\subjclass{17B68, 17B10}
\keywords{Witt algebra, Virasoro algebra, graded thread modules, faithfull representations, affine structures, nilmanifolds
}
\address{Department of Mechanics and Mathematics, Moscow
State University, 119992 Moscow, RUSSIA}
\email{million@mech.math.msu.su}

\begin{abstract}
We study ${\mathbb Z}$-graded thread $W^+$-modules $$V=\oplus_i V_i, \; \dim{V_i}=1,  -\infty \le k< i < N\le +\infty, \; \dim{V_i}=0, \; {\rm \; otherwise},$$ 
over the positive part $W^+$ of the Witt (Virasoro) algebra $W$. There is well-known example  of infinite-dimensional ($k=-\infty, N=\infty$) two-parametric family $V_{\lambda, \mu}$ of $W^+$-modules induced by the twisted $W$-action on tensor densities $P(x)x^{\mu}(dx)^{-\lambda}, \mu, \lambda \in {\mathbb K}, P(x) \in {\mathbb K}[t]$. Another family $C_{\alpha, \beta}$ of $W^+$-modules is defined by the action of two multiplicative  generators $e_1, e_2$ of $W^+$ as $e_1f_i=\alpha f_{i+1}$ and $e_2f_j=\beta f_{j+2}$ for $i,j \in {\mathbb Z}$ and $\alpha, \beta$ are two arbitrary constants ($e_if_j=0, i \ge 3$). 

We classify $(n+1)$-dimensional graded thread $W^+$-modules for $n$ sufficiently large $n$ of three important types. New examples of graded thread $W^+$-modules different from finite-dimensional quotients of $V_{\lambda, \mu}$ and $C_{\alpha, \beta}$ were found.
\end{abstract}
\date{}

\maketitle
\section{Positive part of the Witt algebra and its finite-dimensional graded modules}

\label{sec:1}
The representation theory of the Virasoro algebra $Vir$ was intensively
studied in the 80s of the last century, now one may consider it as a classical section of the representation theory of infinite-dimensional Lie algebras. For instance, the structure of  Verma modules over Virasoro algebra and Fock
modules over $Vir$ were completely determined by B. Feigin and D. Fuchs.
O. Mathieu \cite{M} proved V. Kac's conjecture  which
says that any simple ${\mathbb Z}$-graded $Vir$-module with finite-dimensional homogeneous components is either a highest weight module, a lowest weight module, or the module of type $V_{\lambda, \mu}$.
However  two particular cases of
the theorem were already proved: the classification of Harish-Chandra modules for
which all the multiplicities of weights are $1$ (by I. Kaplansky and J. Santharoubane
\cite{KaSa}) and the classification of unitarizable Harish-Chandra modules (by V. Chari
and A. Pressley \cite{ChP}). Some partial results on Kac's conjecture were obtained in \cite{MP}.

It should be noted that a great number of well-known mathematicians and mathematical physicists contributed to the development of the theory of representations of the Virasoro algebra, for  complete survey we recommend the monograph \cite{Kenji_K}.

In 1992 Benoist answering negatively to Milnor's question \cite{Milnor} on left-invariant affine structures on nilpotent Lie groups presented examples of compact $11$-dimensional  nilmanifolds that carry no complete affine structure. For that he constructed examples of $11$-dimensional nilpotent Lie algebras with no faithfull linear representations of dimension $12$. In his proof \cite{Benoist} he classified
${\mathbb N}$-graded Lie algebras ${\mathfrak a}_r$ defined by two generators $e_1$ and $e_2$ of degrees $1$ and $2$ respectively
and two relations $[e_2,e_3]=e_5$ and $[e_2, e_5]=re_7$, where $r$ is an arbirary scalar and 
$e_{i{+}1}=[e_1, e_i]$ for all $i \ge 2$.

\begin{lemma}[Benoist, \cite{Benoist}]
If $r {\ne}\frac{9}{10},1$, then ${\mathfrak a}_r$ is a finite-dimensional Lie algebra.

1) Let $r=\frac{9}{10}$, then ${\mathfrak a}_r \cong W^+$, positive part of the Witt (Virasoro) algebra, it is infinite-dimensional Lie algebra with the base 
$\tilde e_i, i \ge 1$ and the relations $[\tilde e_i, \tilde e_j]=(j{-}i)\tilde e_{i{+}j}$, where
$\tilde e_i{=}e_i/(i{-}2)!$.

2) Let $r=1$, then ${\mathfrak a}_r \cong {\mathfrak m}_2$.

3) Let $r {\ne} 0,\frac{9}{10},1,2,3$, then ${\mathfrak a}_r$ is a $11$-dimensional filiform Lie algebra.
\end{lemma}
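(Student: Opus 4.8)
The plan is to use that $\mathfrak{a}_r$ is $\mathbb{N}$-graded and generated in degrees $1$ and $2$, to propagate the bracket upward from the low degrees by means of the Jacobi identity, and to identify the exceptional values of $r$ as those for which this propagation never closes up; for all other $r$ it terminates after finitely many steps, giving a finite-dimensional algebra. Since $e_3=[e_1,e_2]$, $e_4=[e_1,e_3],\dots$, the components of $\mathfrak{a}_r$ in degrees $1,\dots,4$ are one-dimensional, and the first relation $[e_2,e_3]=e_5$ makes $\mathfrak{a}_5$ one-dimensional. Suppose inductively that $\mathfrak{a}_1,\dots,\mathfrak{a}_m$ are one-dimensional, $\mathfrak{a}_i=\mathbb{K}e_i$, and write $[e_2,e_i]=c_ie_{i+2}$. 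Expanding $e_k=(\mathrm{ad}\,e_1)^{k-2}e_2$ and using $\mathrm{ad}(e_k)=[\mathrm{ad}(e_1),\mathrm{ad}(e_{k-1})]$ gives, for $i+k\le m$,
\[
[e_k,e_i]=(\delta^{k-2}c)_i\,e_{i+k},\qquad (\delta c)_i:=c_i-c_{i+1}.
\]
Antisymmetry forces $c_1=-1$ and $c_2=0$; the two defining relations give $c_3=1$ and $c_5=r$; and applying $\mathrm{ad}(e_1)$ to the first relation gives $c_4=1$.

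\emph{The recursion.} By the first relation $e_5=[e_2,e_3]$, so $[e_5,e_j]=[e_2,[e_3,e_j]]-[e_3,[e_2,e_j]]$ can be written purely in the $c$'s; comparing this with $[e_5,e_j]=(\delta^3c)_j\,e_{j+5}$ shows that whenever $\mathfrak{a}_{j+5}$ is one-dimensional,
\[
c_j-3c_{j+1}+3c_{j+2}-c_{j+3}=(2c_j-c_{j+1})\,c_{j+3}-c_jc_{j+2}.
\]
Since the coefficient $1+2c_j-c_{j+1}$ of $c_{j+3}$ is generically nonzero, this is a recursion computing each $c_i$ as an explicit rational function of $r$: $c_6=2r-1$, $c_7=\frac{5r-3}{3-r}$, $c_8=rc_7$, and so on, the successive denominators being $3-r$, then a multiple of $r(2-r)$, and so forth. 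The second relation (for $r\ne 0$) rearranges to $e_7=r^{-1}[e_2,e_5]$ and supplies a further family of identities (valid when $\mathfrak{a}_{j+7}$ is one-dimensional) which one checks are compatible with the above through degree $11$. Combined with a codimension count in the free Lie algebra $\mathcal{F}$ on $e_1,e_2$ --- the ideal of relations should have codimension exactly $1$ in $\mathcal{F}_n$ for $n\le 11$ --- this shows that for $r$ outside a finite exceptional set the components $\mathfrak{a}_1,\dots,\mathfrak{a}_{11}$ are one-dimensional with these $c_i$, so that $\dim\mathfrak{a}_r\ge 11$.

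\emph{Classification.} The recursion is obstructed at or below degree $11$ --- forcing the relevant component to have dimension $\ge 2$ --- precisely when one of the denominators $3-r$ or $r(2-r)$ vanishes, that is for $r\in\{0,2,3\}$; these cases are dealt with directly and give finite-dimensional algebras that are not $11$-dimensional filiform. For $r\notin\{0,2,3\}$ suppose in addition that $\mathfrak{a}_{12}\ne 0$: then the recursion also pins down $c_{10}$, and the $j=5$ instance of the second family of identities becomes a polynomial equation in $r$ which is not identically satisfied and whose only roots are $r=\frac{9}{10}$ and $r=1$. Hence for $r\notin\{0,\frac{9}{10},1,2,3\}$ one gets $\mathfrak{a}_{12}=0$, so $\dim\mathfrak{a}_r=11$; being $\mathbb{N}$-graded with one-dimensional components $\mathfrak{a}_1,\dots,\mathfrak{a}_{11}$, it is filiform. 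For $r=1$ the solution $c_i\equiv 1$ $(i\ge 3)$ extends to all degrees and one recognises the multiplication table of $\mathfrak{m}_2$; for $r=\frac{9}{10}$ the solution $c_i=\frac{6(i-2)}{i(i-1)}$ extends to all degrees, and the rescaling $\tilde e_i=e_i/(i-2)!$ turns these structure constants into $[\tilde e_i,\tilde e_j]=(j-i)\tilde e_{i+j}$, so $\mathfrak{a}_{9/10}\cong W^+$.

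The genuinely laborious part is the bookkeeping in the last two paragraphs: showing that the ideal of relations has codimension $1$ in each $\mathcal{F}_n$, $n\le 11$, as long as $r\notin\{0,2,3\}$ (a finite chain of rank computations whose dependence on $r$ must be followed carefully), verifying that the degree-$12$ polynomial has \emph{exactly} the roots $\frac{9}{10}$ and $1$ (so that no further infinite-dimensional or $12$-dimensional examples slip in), and --- for $r=\frac{9}{10}$ --- checking that the closed form $c_i=\frac{6(i-2)}{i(i-1)}$ satisfies all Jacobi identities, which is done most painlessly by producing the explicit isomorphism with $W^+$.
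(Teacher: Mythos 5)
You should first note that the paper does not prove this lemma at all: it is quoted from Benoist's article, so your sketch can only be measured against Benoist's argument, not against a proof in the text. Your computational skeleton is the right one, and it is literally the machinery the paper sets up later for modules: your quadratic identity for the $c_j$ is the equation $R^5_j$, and your second family is $R^7_j$ with $\frac{9}{10}$ replaced by $r$. The values you record check out ($c_6=2r-1$, $c_7=\frac{5r-3}{3-r}$, $c_8=rc_7$, the solution $c_i\equiv 1$ at $r=1$, and $c_i=\frac{6(i-2)}{i(i-1)}$ at $r=\frac{9}{10}$, which indeed has $c_5=\frac{9}{10}$ and rescales to the Witt relations); one small slip is that your formula $[e_k,e_i]=(\delta^{k-2}c)_i e_{i+k}$ is valid only for $i\ge 2$ (already $[e_4,e_1]=-e_5$ while $c_1-2c_2+c_3=0$).

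The genuine gaps are the following. Everything you compute is conditional on the standing hypothesis that the graded components are one-dimensional; the spanning identity $(\mathfrak{a}_r)_n=[(\mathfrak{a}_r)_1,(\mathfrak{a}_r)_{n-1}]+[(\mathfrak{a}_r)_2,(\mathfrak{a}_r)_{n-2}]$ only bounds the dimension by $2$, and nothing in your argument forces $[e_2,e_{n-2}]$ into $\langle e_n\rangle$. For $r=\frac{9}{10}$ and $r=1$ this is the whole content of the statement: checking Jacobi for the closed-form $c_i$, or producing the map to $W^+$, only yields a surjection $\mathfrak{a}_{9/10}\twoheadrightarrow W^+$ (since $W^+$ satisfies the two relations); the assertion $\mathfrak{a}_{9/10}\cong W^+$ says that these two relations present $W^+$, i.e.\ that every component of the quotient is at most one-dimensional in every degree, and that must be proved, not inferred from the existence of the isomorphic target. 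Likewise, finite-dimensionality does not follow from $(\mathfrak{a}_r)_{12}=0$ alone: one still has $(\mathfrak{a}_r)_{13}=[(\mathfrak{a}_r)_2,(\mathfrak{a}_r)_{11}]$, so degree $13$ must be killed as well (or two consecutive degrees shown to vanish), and the alternative that an obstructed degree is two-dimensional rather than zero has to be excluded, not asserted. A concrete error of detail: the coefficient of $c_{10}$ in your degree-$12$ step is $1+2c_7-c_8=\frac{-5r^2+12r-3}{3-r}$, which vanishes at $r=\frac{6\pm\sqrt{21}}{5}$, so the recursion does not determine $c_{10}$ for all $r\notin\{0,2,3\}$ and the exceptional denominators are not only $3-r$ and $r(2-r)$; those values must enter the case analysis (there the resulting inconsistent linear equation is what eliminates degree $12$). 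Finally, the steps you explicitly defer --- the codimension-one count through degree $11$, the verification that the degree-$12$ obstruction has exactly the roots $\frac{9}{10}$ and $1$, and the finite-dimensionality of the cases $r\in\{0,2,3\}$ --- are precisely the substance of Benoist's proof, so as written the proposal is a sound plan whose decisive steps remain unproved.
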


We recall that a finite-dimensional nilpotent Lie algebra ${\mathfrak g}$ is called filiform if it has the 
maximal possible  (for its dimension) value of nil-index $s{(\mathfrak g})=\dim{\mathfrak g}{-}1$. In its turn the nil-index $s{(\mathfrak g})$ is the length of the descending central series of ${\mathfrak g}$.

The idea of Benoist's construction was the following. Benoist considered the algebra  ${\mathfrak a}_{-2}$
of the family ${\mathfrak a}_r$. It is a $11$-dimensional filiform Lie algebra. Itself ${\mathfrak a}_{-2}$ admits a complete affine structure because it is positively graded. Benoist considered its filtered deformation ${\mathfrak a}_{-2,s,t}$ that it is not positively graded and he proved that ${\mathfrak a}_{-2,s,t}$ does not admit any faithfull $12$-dimensional representation. That is an abstruction for existence of a complete affine structure on the corresponding nilmanifold. The proof of non-existence of faithfull $12$-representation was based in particular on the classification of graded faithfull ${\mathfrak a}_{-2}$-modules.

Despite the fact that the positive part $W^+$ of the Witt algebra  was not used in his proof, Benoist suggested that special deformations of finite-dimensional factors of $W^+$ can also be used for counterexamples to Milnor's conjecture in higher dimensions $n > 11$. Hence the classification of 
graded thread $W^+$-modules is quite necessary for the possible proof. 

The aim of this paper is a classification of  $(n+1)$-dimensional graded thread $W^+$-modules with additional structure restriction 
$e_nf_1 \ne 0$
which means that the corresponding  representation of the $n$-dimensional quotient
$W^+/\langle e_{n+1}, e_{n+2},\dots, \rangle$ is faithfull.

Also finite-dimensional graded thread $W^+$-modules played an essential role in explicite constructions of singular Virasoro vectors in  \cite{BdFIZ} and \cite{Mill2}. Besides this graded thread $W^+$-modules were
used for the construction of trivial Massey products in the cohomology $H^*(W^+, {\mathbb K})$ in \cite{FeFuRe},
answering V. Buchstaber's conjecture, that the cohomology $H^*(W^+, {\mathbb K})$ is generated by non-trivial Massey products of one-dimensional cohomology classes.  Finaly V. Buchstaber's conjecture was proved for non-trivial Massey products in \cite{Mill}.

The paper is organized as follows. In the Section 1 we recall necessary definitions and facts on positive part of the Witt (Virasoro) algebra and its graded modules. In particular we introduce the important class of 
$(n{+}1)$-dimensional graded thread $W^+$-modules by a property $e_nf_1 \ne 0$ mentioned above.
We prove  the key Lemma \ref{key_lemma} and its Corollary stating that we have only three types
of $(n+1)$-dimensional graded thread $W^+$-modules defined by basis $f_1,\dots,f_{n+1}$:

a) no zeroes of $e_1$, i.e. $e_1f_i \ne 0, \; i=1,\dots,n$;

b) one zero of $e_1$, i.e. $\exists! k, \; 1 \le k \le n, \; e_1f_k{=}0$;

c)  two neighboring zeroes of $e_1$, i.e. $\exists! k,  1 {\le} k {\le} n{-}1, e_1f_k{=}e_1f_{k{+}1}{=}0$. 

We classify the modules of the type a) (we call them graded thread $W^+$-modules of the type
$(1,1,\dots,1)$) in Section 3. $W^+$-modules with one zero, subcase b), so called modules of the type $(1,\dots,1,0,1,\dots,1)$ are classified in Section 4. We remark that $W^+$-modules of the family $V_{\lambda, \mu}$ are generic modules for both
types. 

Section 5 is devoted to the most interesting case of graded $W^+$-modules with two neighboring zeroes of $e_1$. Modules of this type were applied in \cite{Mill} for the proof of V. Buchstaber's conjecture on Massey products in Lie algebra cohomology $H^*(W^+, {\mathbb K})$ as we mentioned above. 

\section{Definitions and examples}
The Witt algebra $W$ can be defined by its infinite basis 
$e_i=t^{i{+}1}\frac{d}{dt}, i \in {\mathbb Z}$ and the Lie bracket is given by
$$
[e_i,e_j]=(j-i)e_{i+j}, i,j \in {\mathbb Z}.
$$
The Virasoro algebra  Vir is infinite-dimensional Lie algebra, defined by its basis  $\{z, e_i, i \in
{\mathbb Z}\}$ and commutation relations:
$$
[e_i,z]=0, \; \forall i \in {\mathbb Z}, \quad
[e_i,e_j]=(j-i)e_{i+j}+\frac{j^3-j}{12}\delta_{-i,j}z.
$$
Vir is a one-dimensional central extension of the Witt algebra $W$
(the one-dimensional center is spaned by $z$).

There is a subalgebra $W^+ \subset W$ spanned by $e_1, e_2, e_3, \dots,$ basis vectors with positive 
subscripts that we cal the {\it positive part
of the Witt algebra}. 

\begin{definition}[\cite{FeFuRe} ]
\label{thread}
A $W^+$-module $V$ is called graded thread $W^+$-module if there exist two integers $k, N, k < N$, or $k=-\infty, N=+\infty$ and a decompostion $V=\oplus_{{-}\infty}^{{+}\infty}V_j, j \in {\mathbb Z},$
such that:
$$
e_iV_j \subset V_{i{+}j}, i\in {\mathbb N}, j\in {\mathbb Z}, \;\; \dim V_i= \left\{ \begin{array}{cr} 1, & k < i < N,\\ 0, & {\rm otherwise}\\ \end{array} \right.,
$$
\end{definition}
where $e_1, e_2, \dots, e_k, \dots$ is a graded basis of the positive part  $W^+$ of the Witt (Virasoro) algebra.

\begin{remark}
A graded thread $W^+$-module $V=\oplus_{{-}\infty}^{{+}\infty}V_j$ can be defined by its basis
$$f_i,i\in {\mathbb Z}, \;k<i<N, \langle f_i \rangle =V_i,$$ and by the set of its structure constants
$$\alpha_i, \beta_j, \;i,j \in {\mathbb Z}, \; k<i<N{-}2, k<j<N{-}3,$$ such that
$$
e_1f_i=\alpha_i f_{i{+}1}, \; \quad
e_2f_j=\beta_j f_{j{+}2}.
$$
Certainly the constants $\alpha_i, \beta_j$ can not be arbitrary. They must satisfy certain algebraic
relations that we are going to discuss later.
\end{remark}
To begin with, we present two infinite-dimensional examples.
\begin{itemize}
\item One can take $\alpha_i=\alpha,\; \beta_j=\beta$. It is a $W^+$-module $C_{\alpha,\beta}$ where all other
basic elements $e_i, i \ge 3$ act trivially.
\item We have defined basic vectors $e_i$ of $W^+$ as differential operators $e_i=x^{i+1}\frac{d}{dx}$ on
the real (complex) line.
One can consider  the space $V_{\lambda, \mu}$ of tensor densities of the form $P(x)x^{\mu}(dx)^{-\lambda}$, where $P(x)$ is
some polynomial on $x$ and the parameters $\lambda, \mu$ are arbitrary real (complex) numbers. Operator $\xi=f(x)\frac{d}{dx}$ acts
on $F_{\lambda, \mu}$ by means of the Lie derivative $L_{\xi}$:
$$
L_{\xi}P(x)x^{\mu}(dx)^{-\lambda}=\left(f(x)(P(x)x^{\mu})'-\lambda P(x)x^{\mu}f'(x)\right)(dx)^{-\lambda}.
$$
Taking the infinite basis $f_j=x^{j+\mu}(dx)^{-\lambda}$ of $F_{\lambda, \mu}$
we have the following $W^+$-action \cite{Fu}:
$$
e_kf_j=(j+\mu-\lambda(k+1))f_{k+j}.
$$
In other words $V_{\lambda, \mu}$ is a twist of the natural action of $W$ on ${\mathbb C}[t,t^{-1}]$.  The defining set for a $W^+$-module $V_{\lambda, \mu}$ is
$$\alpha_i=i+\mu-2\lambda, \; \beta_j=j+\mu-3\lambda.$$
\end{itemize}
\begin{remark}
The vector space $V_{\lambda, \mu}$ can be regarded as a $W$-module over the entire Witt algebra $W$, or, that is equivalent,
as a zero-energy Virasoro representation \cite{KacR}. As $W$-module $V_{\lambda, \mu}$ is reducible if $\lambda \in {\mathbb Z}$ and
$\beta{=}0$ or $\lambda \in {\mathbb Z}$ and $\beta{=}1$ otherwise it is irreducible \cite{KacR}. The infinite-dimensional $W$-modules $V_{\lambda, \mu}$ and
$V_{\lambda, \mu{+}m}, m \in {\mathbb Z}$ are isomorphic.
\end{remark}

Having an infinite-dimensional graded thread $W^+$-module $V=\langle f_i, i \in {\mathbb Z}\rangle$
one can construct a $(N-k-1)$-dimensional $W^+$-module
$V(k,N)$ taking a subquotient of $V$:
$$
V(k,N) = \oplus_{i>k}^{\infty}V_i/\oplus_{j> N-1}^{\infty}V_j,\quad V(k,N)=\langle f_{k+1},\dots,f_{N-1}\rangle.
$$

From now we deal with a
$(n+1)$-dimensional graded thread $W^+$-module $V$
defined by its basis $ f_1,f_2,\dots,f_{n{+}1}$ and a finite set of constants $\left\{\alpha_1,\dots,\alpha_n, \beta_1,\dots,\beta_{n-1}\right\}$.

The dual module $V^*$ of a finite-dimensional graded thread $W^+$-module $V=\langle f_1,f_2,\dots,f_{n{+}1} \rangle$
in its turn has the structure of a graded thread $W^+$-module with respect to the basis 
$$f'_1=f^{n{+}1},f'_2=f^{n}, \dots,f'_n=f^2, f'_{n{+}1}=f^1,$$
where $f^1,\dots, f^n, f^{n{+}1}$ is the dual basis in $V^*$ with respect to the basis $f_1,\dots, f_n, f_{n{+}1}$ of $V$,  $f^i(f_j)=\delta^i_j$. 
Let $\{\alpha_1,\dots,\alpha_n, \beta_1,\dots,\beta_{n-1}\}$ be the set of structure constants of the $W^+$-module $V$ with respect to the basis 
$f_1,\dots,f_{n{+}1}$.
Then the set
 $$\{-\alpha_n,\dots,-\alpha_1, -\beta_{n-1},\dots,-\beta_1\}$$ defines the structure of the dual $W^+$-module $V^*$:
$$
\begin{array}{l}
e_1f'_{j}=e_1 f^{n{+}2{-}j}={-}\alpha_{n{+}1{-}j} f^{n{+}1{-}j}={-}\alpha_{n{+}1{-}j} f'_{j{+}1},\; j=1,2,\dots,n;\\
e_2f'_{k}=e_2 f^{n{+}2{-}k}={-}\beta_{n{-}k} f^{n{-}k}={-}\beta_{n{-}k} f'_{k{+}2},\; k=1,2,\dots,n-1.
\end{array}
$$
We recall that the dual $W^+$-action on $V^*$ is defined by $(g\cdot f)(x):=-f(gx)$, where $g \in W^+, x \in V, f \in V^*$.

For obvious reasons there is no sense in discussing the irreducibility of graded thread $W^+$-modules: they are all reducible.  Instead of irreducibility one has to discuss indecomposability. 
Let the defining sets of structure constants of a $W^+$-module $V$ be of the following type
$$
\{\alpha_1,\dots, \alpha_m, 0, \alpha_{m{+}1},\dots, \alpha_n, \beta_1,\dots, \beta_{m{-}1},0, 0, \beta_{m{+}2},\dots, \beta_{n-1}\}, \; 0 \le m < n
$$
Then $V$ is a direct sum of two $W^+$-modules $V_1$ and $V_2$
$$
V=V_1 \oplus V_2, \; V_1=\langle f_1,\dots, f_{m{+}1}\rangle, \;  V_2=\langle f_{m{+}2},\dots, f_{n{+}1}\rangle.
$$
where $V_1$ and $V_2$ have the following defining sets of structure constants 
$$
\{ \alpha_1,\dots, \alpha_m, \beta_1,\dots, \beta_{m{-}1}\},\;  \{\alpha_{m{+}1},\dots, \alpha_n, \beta_{m{+}2},\dots, \beta_{n-1}\}, 
$$
respectively. The converse is also true. One can consider a direct sum  $V_1\oplus V_2$ of two finite-dimensional graded thread $W^+$-modules $V_1$ and $V_2$, may be after possible renumbering of the basis vectors from $V_2$.

\begin{lemma}
\label{key_lemma}
Let  $V=\langle f_1,f_2,\dots,f_{n+1} \rangle$ be a
$(n+1)$-dimensional graded thread $W^+$-module such that
$$
\exists k, p, \; 1\le k < k{+}p \le n{+}1, \; p \ge 2, \quad e_1f_k=e_1f_{k{+}p}=0.
$$
Then $e_nf_1=0$.
\end{lemma}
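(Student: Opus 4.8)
The plan is to write $e_n$ explicitly in terms of the two multiplicative generators $e_1,e_2$ inside the universal enveloping algebra $U(W^+)$, apply the resulting expression to $f_1$, and observe that every monomial that then occurs contains one of the two vanishing structure constants $\alpha_k$, $\alpha_{k+p}$ (that is, $e_1f_k=e_1f_{k+p}=0$) as a factor, so that $e_nf_1=0$.

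First I would note that the relations $[e_1,e_m]=(m-1)e_{m+1}$ give, by an immediate induction on $m$,
\[
e_n=\frac{1}{(n-2)!}\,(\mathrm{ad}\,e_1)^{\,n-2}(e_2)\qquad\text{inside }U(W^+).
\]
Expanding the iterated commutator through the standard identity $(\mathrm{ad}\,e_1)^{m}(e_2)=\sum_{j=0}^{m}(-1)^{j}\binom{m}{j}e_1^{\,m-j}e_2\,e_1^{\,j}$ with $m=n-2$ exhibits $e_n$ as a linear combination of the monomials $e_1^{\,n-2-j}e_2\,e_1^{\,j}$, $0\le j\le n-2$. Applying each of them to $f_1$ and using $e_1^{\,r}f_s=\alpha_s\alpha_{s+1}\cdots\alpha_{s+r-1}f_{s+r}$ together with $e_2f_s=\beta_s f_{s+2}$, the $j$-th monomial sends $f_1$ to $(\alpha_1\cdots\alpha_j)\,\beta_{j+1}\,(\alpha_{j+3}\cdots\alpha_n)\,f_{n+1}$, whence
\[
e_nf_1=\frac{1}{(n-2)!}\left(\,\sum_{j=0}^{n-2}(-1)^{j}\binom{n-2}{j}\Bigl(\prod_{i=1}^{j}\alpha_i\Bigr)\beta_{j+1}\Bigl(\prod_{i=j+3}^{n}\alpha_i\Bigr)\right)f_{n+1},
\]
with the convention that empty products equal $1$. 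The point to record is that the $\alpha$-factors in the $j$-th summand run over exactly $\{1,\dots,n\}\setminus\{j+1,\,j+2\}$, i.e. over all of them except \emph{two consecutive} indices.

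To finish I would invoke the hypothesis $\alpha_k=\alpha_{k+p}=0$ with $p\ge2$: for the $j$-th summand to be nonzero, both $k$ and $k+p$ would have to belong to the two-element set $\{j+1,j+2\}$, which is impossible since consecutive integers differ by $1<p$. Hence each summand has $\alpha_k$ or $\alpha_{k+p}$ among its factors and so vanishes, giving $e_nf_1=0$.

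I do not anticipate a real obstacle: the argument collapses to the single combinatorial remark that $e_n$ acting on the bottom vector $f_1$ is a sum of products of structure constants along ``paths'' from degree $1$ to degree $n+1$ that perform exactly one $e_2$-jump --- which skips a single basis vector --- and $e_1$-steps everywhere else, so no single such path can get past two zeros of $e_1$ lying more than one step apart. The only place needing attention is the bookkeeping of which two consecutive $\alpha$-indices are omitted by each monomial; the rest is routine.
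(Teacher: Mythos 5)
Your proof is correct, but it follows a genuinely different route from the paper's. The paper argues by induction at the level of the module action: from $e_1f_k=e_1f_{k+p}=0$ it gets $(p-1)e_{p+1}f_k=e_1e_pf_k-e_pe_1f_k=0$, then propagates zeros step by step to conclude $e_if_j=0$ for all $i\ge k+p$ and $1\le j\le k$, in particular $e_nf_1=0$. You instead work in $U(W^+)$: the identity $e_n=\frac{1}{(n-2)!}(\mathrm{ad}\,e_1)^{n-2}(e_2)$ together with the expansion $(\mathrm{ad}\,a)^m(b)=\sum_{j}(-1)^j\binom{m}{j}a^{m-j}ba^{j}$ gives the closed formula for $e_nf_1$ as a signed sum of terms $\bigl(\prod_{i=1}^{j}\alpha_i\bigr)\beta_{j+1}\bigl(\prod_{i=j+3}^{n}\alpha_i\bigr)$, each omitting exactly the two consecutive indices $j+1,j+2$; since $p\ge2$, no term can omit both $k$ and $k+p$, so every term contains a vanishing $\alpha$ and $e_nf_1=0$. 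Your bookkeeping (the action of $e_1^{\,r}$ and $e_2$ on the basis, the index ranges of the omitted $\alpha$'s) is accurate. What your approach buys is an explicit formula for the coefficient of $e_nf_1$ valid for any graded thread module, reducing the lemma to a one-line combinatorial remark; what the paper's induction buys is the stronger intermediate statement $e_if_j=0$ for all $i\ge k+p$, $j\le k$, obtained without passing to the enveloping algebra. One caveat, which applies equally to the paper's own argument: when $k+p=n+1$ the hypothesis $e_1f_{k+p}=0$ is automatic and there is no $\alpha_{n+1}$, so neither your expansion nor the paper's induction yields $e_nf_1=0$ in that case; both proofs really establish the lemma under the intended reading that $k$ and $k+p$ are genuine zeros among $\alpha_1,\dots,\alpha_n$, i.e. $k+p\le n$, which is exactly how the lemma is used in the Corollary.
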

\begin{proof}
The equalities $e_1f_k=e_1f_{k{+}p}=0$ imply that
$$
(p-1)e_{p+1}f_k=\left(e_1e_pf_k -e_pe_1f_k \right)=0.
$$
On the next step we have
$$
pe_{p+2}f_k{=}\left(e_1e_{p+1}f_k -e_{p+1}e_1f_k \right)=0, \;
pe_{p+2}f_{k-1}=\left(e_1e_{p+1}f_{k-1}-e_{p+1}e_1f_{k-1} \right)=0.
$$
One can suppose by an inductive assumption that
$e_{p+s}f_k=\dots=e_{p+s}f_{k-s+1}=0$ for some $s, 1 \le s \le k-1$.
Then it follows that
$$
(p+s-1)e_{p+s+1}f_k=0, \;\dots,
(p+s-1)e_{p+s+1}f_{k{-}s}=\left(e_1e_{p+s}f_{k-s}-e_{p+s}e_1f_{k-s} \right)=0.
$$
Hence $e_{k+p}f_1=\dots=e_{k+p}f_k=0$ and
$$
(k+p-1)e_{k+p+1}f_1=(e_1e_{k+p}f_1-e_{k+p}e_1f_1)=\dots=(k+p-1)e_{k+p+1}f_k=0.
$$
Continuing these calculations we will have that
$e_if_1=\dots=e_if_k=0$ for all $i \ge k{+}p$. In particular $e_nf_1=0$.
\qed
\end{proof}
\begin{corollary}
Let $V=\langle f_1,f_2,\dots,f_{n{+}1} \rangle$ be a
$(n{+}1)$-dimensional graded thread $W^+$-module such that $e_nf_1\ne 0$. Then for its defining set of constants $\alpha_i, i=1,\dots,n$,
 we have three possibilities:
\begin{itemize}
\item[a)] no zeroes, $\alpha_i \ne 0, \; i=1,\dots,n$;
\item[b)] the only one zero, $\exists! k, \; 1 \le k \le n, \; \alpha_k{=}0$;
\item[c)] two neighboring zeroes,
$\exists! k,  1 {\le} k {\le} n{-}1, \alpha_k{=}\alpha_{k{+}1}{=}0$.
\end{itemize}
\end{corollary}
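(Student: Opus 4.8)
The plan is to read the Corollary off Lemma~\ref{key_lemma} by contraposition; no computation is needed. Put $Z := \{\, i : 1 \le i \le n,\ \alpha_i = 0 \,\}$, the set of indices at which $e_1$ annihilates a basis vector (recall $e_1 f_i = \alpha_i f_{i+1}$). The three alternatives a), b), c) say precisely that $Z$ is empty, that $Z$ is a singleton, and that $Z$ is a pair of consecutive integers. So it suffices to show that $Z$ has at most two elements and that, in the two-element case, they are adjacent.

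The crucial point is that $Z$ contains no two indices at distance $\geq 2$. Suppose, to the contrary, that $i, j \in Z$ with $i < j$ and $j - i \geq 2$. Set $k := i$ and $p := j - i$; then $1 \le k < k+p = j \le n$, $p \geq 2$, and $e_1 f_k = e_1 f_{k+p} = 0$, so the hypotheses of Lemma~\ref{key_lemma} are satisfied and we obtain $e_n f_1 = 0$, contradicting $e_n f_1 \neq 0$. Hence any two distinct elements of $Z$ differ by exactly $1$. This forces $|Z| \leq 2$ at once (a three-element subset $\{i_1 < i_2 < i_3\}$ would have $i_3 - i_1 \geq 2$), and when $|Z|=2$ its members must be of the form $k, k{+}1$. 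The three possibilities $|Z| = 0, 1, 2$ are exactly cases a), b), c); the uniqueness of $k$ in b) and c) is immediate, since in c) the index $k$ is the smaller of the two elements of $Z$, whence $1 \le k \le n-1$.

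I do not expect a genuine obstacle: once Lemma~\ref{key_lemma} is available the Corollary is immediate. The only point that calls for a moment's attention is the legitimacy of the indices fed into the Lemma — one must know that $f_k$ and $f_{k+p}$ are actual basis vectors of $V$, i.e.\ that $k+p \le n$, so that $e_1 f_{k+p} = 0$ expresses the vanishing of a bona fide structure constant and the constraint $k < k+p \le n+1$ of Lemma~\ref{key_lemma} is met. In our setting this is automatic, since both $\alpha_k$ and $\alpha_{k+p}$ lie among the listed constants $\alpha_1, \dots, \alpha_n$; in particular the Lemma is never invoked with $k+p = n+1$.
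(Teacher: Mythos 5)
Your proposal is correct and matches the paper's intent exactly: the Corollary is stated without a separate proof precisely because it is the contrapositive reading of Lemma~\ref{key_lemma}, which is what you carry out. Your closing remark about only invoking the Lemma with $k+p\le n$ (both zeroes among the genuine structure constants $\alpha_1,\dots,\alpha_n$) is a sensible precaution, since the Lemma's proof only yields $e_if_1=0$ for $i\ge k+p$.
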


Following \cite{Benoist} we will consider a new basis of $W^+$:
$$
\tilde e_1 =e_1, \quad \tilde e_i = 6(i{-}2)!e_i.
$$
Now we have in particular that
$$
[\tilde e_1,\tilde e_i]=\tilde e_{i{+}1}, \; [\tilde e_2, \tilde
e_3]=\tilde e_5, \; [\tilde e_2,\tilde e_5]=\frac{9}{10} \tilde
e_7.
$$
It was proved in \cite{Benoist} that the Lie algebra generated by two elements $\tilde e_1, \tilde e_2$
with the following two relations on them
$$
\label{def_relat}
\left[\tilde e_2,[\tilde e_1, \tilde e_2]\right]=\left[\tilde e_1,[\tilde e_1,[\tilde e_1, \tilde e_2]]\right],\;
\left[\tilde e_2,\left[\tilde e_1,[\tilde e_1,[\tilde e_1, \tilde e_2]]\right]\right]=
\frac{9}{10}\left[\tilde e_1,\left[\tilde e_1,\left[\tilde e_1,[\tilde e_1,[\tilde e_1, \tilde e_2]]\right]\right]\right]
$$
is isomorphic to $W^+$.

Hence the defining relations (\ref{def_relat}) will give us the following set of equations
\begin{eqnarray*}
R^5_i:\quad\quad ([\tilde e_2, \tilde e_3]-\tilde e_5)f_i=0, \;\; i=1,\dots,n{-}5,\\
R^7_j:\quad ([\tilde e_2, \tilde e_5]-\frac{9}{10}\tilde
e_7)f_i=0, \; i=1,\dots,n{-}7.
\end{eqnarray*}
It is possible to write out the explicit expressions for $R^5_i$
and $R^7_j$ in terms of $\alpha_i, \beta_j$. However rescaling $f_i \to \gamma_if_i$ we can make the
constants $\alpha_i$ equal to one or to zero.

\section{Graded thread $W^+$-modules of the type $(1,1,\dots,1)$.}

Let us consider the
case when all constants $\alpha_i$ of the defining  set for a graded thread $W^+$-module are non-trivial and hence we may
assume (after a suitable rescaling of the basis vectors $f_1,\dots, f_{n+1}$) that $$\tilde e_1f_i=f_{i{+}1}, i=1,\dots, n, \quad \tilde e_2f_j=b_jf_{j{+}2}, j=1,\dots, n-1.$$
Then the equations $R^5_i, R^7_j$ are read as
\begin{equation}
\label{R-uravnenia}
\begin{split}
R^5_i: \quad b_{i{+}3}(b_i{-}b_{i{+}1})-b_i(b_{i{+}2}-b_{i{+}3})=b_i-3b_{i{+}1}+3b_{i{+}2}-b_{i{+}3},\\
R^7_j:\quad
b_{j{+}5}(b_j{-}3b_{j{+}1}{+}3b_{j{+}2}{-}b_{j{+}3})-b_j(b_{j{+}2}{-}3b_{j{+}3}{+}3b_{j{+}4}{-}b_{j{+}5})=\\
=\frac{9}{10}(b_j{-}5b_{j{+}1}{+}10b_{j{+}2}{-}10b_{j{+}3}{+}5b_{j{+}4}{-}b_{j{+}5}),\\
i=1,\dots,n, \; j=1,\dots, n-1.
\end{split}
\end{equation}
There is the relation between $b_i$ and the
initial structure constants $\alpha_i, \beta_j$ of our graded $W^+$-module $V$:
$$
b_i=\frac{6\beta_i}{\alpha_i \alpha_{i+1}}, \quad i=1,\dots,i{-}2.
$$
Consider a module $F_{\lambda, \mu}$. Recall that it has the definig set with $\alpha_i=i+\mu-2\lambda$ and $\beta_j=j+\mu-3\lambda$. We introduce new parameters 
$$
u=\mu-3\lambda, \; v=\mu-2\lambda.
$$
Suppose that $v=\mu-2\lambda \ne {-}1, {-}2,\dots, {-}n.$ Then $F_{\lambda, \mu}$ is of the type $(1,1,\dots,1)$ and 
its coordinates $(b_1,b_2,\dots, b_{n-1})$ are
$$
b_i=6\frac{(u+i)}{(v+i)(v+i+1)}, \; i=1,2,\dots, n-1.
$$
\begin{definition}
An affine (projective) variety  defined by the system  of algebraic equations (\ref{R-uravnenia}) in ${\mathbb K}^{n-1} ({\mathbb P\mathbb K}^{n-2})$ 
is called the affine (projective) variety of $(n+1)$-dimensional graded thread $W^+$-modules of the type $(1,1,\dots,1)$.
\end{definition}
\begin{theorem}
\label{first_main}
Let $V$ be a $(n{+}1)$-dimensional graded thread $W^+$-module of the type $(1,1,\dots,1)$ and 
$n \ge 9$, i.e. $W^+$-module defined by its basis  and defining set of relations
$$
\begin{array}{c}
V=\langle f_1, f_2, \dots, f_{n{+}1} \rangle,\\
e_1f_i=f_{i{+}1}, \; i=1,2,\dots, n;\\
e_2 f_j=b_jf_{j{+}2}, \; j=1,2, \dots, n-1,
\end{array}
$$
Then $V$ is isomorphic to the one and only one  $W^+$-module from the list below.
\begin{itemize}
\item
$V_{\lambda, \mu}(n{+}1), \quad  \mu-2\lambda \ne {-}1, {-}2, \dots, {-}n.$
\item 
$C_{1,x}(n+1),\quad b_1=b_2=\dots=b_{n-1}=x;$
\item
$V_{-2,-3}^t(n+1), \; t \ne 4, \quad b_1=t, b_i=\frac{6(i+3)}{(i+1)(i+2)}, i=2,\dots, n-1;$
\item
$V_{1,3{-}n}^{t}(n+1), \; t \ne 4, \quad b_i=-\frac{6(n-i)}{(n-i-2)(n-i-1)}, i=1,\dots, n-2, b_{n-1}=-t;$
\item
$V_{0,-1}^t(n+1), \; t \ne 6, \quad b_1=t, b_i=\frac{6}{i}, i=2,\dots, n-1;$
\item
$V_{{-}1,{-}2{-}n}^{t}(n+1), \; t \ne 6, \quad b_i=-\frac{6}{(n-i)}, i=1,\dots, n-2, b_{n-1}=t;$
\end{itemize}
\end{theorem}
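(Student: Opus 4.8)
The plan is to convert the two defining relations into a finite algebraic system in the first three structure constants $b_1,b_2,b_3$, to solve it on the open set where a certain leading coefficient is nonzero, and to treat the degenerate locus separately. Write $\tilde e_k f_j=\phi(k,j)f_{k+j}$. The normalisation $\tilde e_1 f_i=f_{i+1}$ together with the relations $[\tilde e_1,\tilde e_k]=\tilde e_{k+1}$ makes every $\phi(k,\cdot)$ an explicit iterated finite difference of the sequence $(b_j)$, so $V$ is completely determined by $(b_1,\dots,b_{n-1})$ and the only surviving conditions are the equations $R^5_i,R^7_j$ of $(\ref{R-uravnenia})$. The elementary point is that $R^5_i$ is affine-linear in $b_{i+3}$, and likewise in $b_i$:
$$
b_{i+3}\bigl(2b_i-b_{i+1}+1\bigr)=b_i b_{i+2}+b_i-3b_{i+1}+3b_{i+2}.
$$
Hence on the open stratum where $2b_i-b_{i+1}+1\neq 0$ for all $i$ the entire sequence is a rational function of $(b_1,b_2,b_3)$; dually, where $2b_{i+3}-b_{i+2}-1\neq 0$ it is a rational function of $(b_{n-3},b_{n-2},b_{n-1})$, the two descriptions being exchanged by the operation $V\mapsto V^{*}$ recalled above.

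Next I would establish, by a single finite computation with the finite-difference formulas for $\phi$, that each $R^7_j$ lies in the ideal generated by the $R^5_\bullet$ together with $R^7_1$ and $R^7_2$. Since $n\ge 9$ guarantees that $R^7_1$ and $R^7_2$ are both present, on the generic stratum the full system $\{R^5_\bullet,R^7_\bullet\}$ is equivalent to $\{R^5_\bullet,R^7_1,R^7_2\}$; substituting the recursion for $b_4,b_5,b_6$ turns it into finitely many polynomial equations in $(b_1,b_2,b_3)$ alone.

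Solving these equations is the core of the proof and, I expect, the main obstacle. The two-parameter surface $\mathcal S$ parametrised by
$$
(b_1,b_2,b_3)=\Bigl(\tfrac{6(u+1)}{(v+1)(v+2)},\ \tfrac{6(u+2)}{(v+2)(v+3)},\ \tfrac{6(u+3)}{(v+3)(v+4)}\Bigr),\qquad(u,v)\in\mathbb K^2,
$$
lies in the solution set because each $V_{\lambda,\mu}$ is a genuine $W^+$-module, and the line $\ell=\{b_1=b_2=b_3\}$ lies in it because a constant sequence satisfies all of $(\ref{R-uravnenia})$; one has to show there is nothing more. I would do this by elimination: eliminate $b_3$ and then $b_2$ by resultants, and check that every solution so obtained lies on $\overline{\mathcal S}\cup\ell$ — equivalently, exhibit the defining polynomial of $\overline{\mathcal S}$, verify that it divides each of the equations, and see that the complementary factor cuts out exactly the degenerate locus treated below. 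It then remains to attach a module to each point of the solution surface: a point of $\mathcal S$ with $\mu-2\lambda\notin\{-1,\dots,-n\}$ gives $V_{\lambda,\mu}(n+1)$; the line $\ell$ gives $C_{1,x}(n+1)$; and the remaining boundary points — those coming from parameter values at which the infinite module $V_{\lambda,\mu}$ acquires a vanishing structure constant at one of the two ends of the length-$(n+1)$ window — reproduce the four one-parameter families $V_{-2,-3}^{t}(n+1)$, $V_{1,3-n}^{t}(n+1)$, $V_{0,-1}^{t}(n+1)$, $V_{-1,-2-n}^{t}(n+1)$, namely the corresponding singular quotients with the single ill-defined constant promoted to a free parameter $t$, the excluded values $t=4$ (resp.\ $t=6$) being precisely those for which an honest type-$(1,\dots,1)$ module $V_{\lambda,\mu}(n+1)$ reappears.

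Finally, the degenerate stratum and uniqueness. If $2b_{i_0}-b_{i_0+1}+1=0$ for some $i_0$, then $R^5_{i_0}$ additionally forces $b_{i_0}b_{i_0+2}+b_{i_0}-3b_{i_0+1}+3b_{i_0+2}=0$ and releases $b_{i_0+3}$ as a free parameter; running the forward and backward recursions on the two sides of $i_0$ and re-imposing $R^5_\bullet,R^7_\bullet$ — again using $n\ge 9$ — one checks that such a break can occur only immediately next to one of the ends of the chain and that the determined part of the sequence is then forced onto one of the four degenerate $V_{\lambda,\mu}$-pieces already listed, so that no new module appears. For uniqueness, a graded isomorphism of two such modules respects the grading, hence is diagonal in the $f_i$, and must preserve $\tilde e_1 f_i=f_{i+1}$, so it is a single scalar; thus the sequence $(b_1,\dots,b_{n-1})$ is a complete isomorphism invariant, and for $n\ge 9$ it recovers $(\lambda,\mu)$ in the generic case and plainly separates the six items of the list for $t$ in the stated ranges.
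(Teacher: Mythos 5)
Your rewriting of $R^5_i$ as $b_{i+3}\bigl(2b_i-b_{i+1}+1\bigr)=b_ib_{i+2}+b_i-3b_{i+1}+3b_{i+2}$ is correct, and the global picture you describe (generic surface of $V_{\lambda,\mu}$-modules, the constant line $C_{1,x}$, deformations appearing at the ends of the chain) is the right one. But what you have written is a plan, not a proof: every step on which the theorem actually rests is announced rather than carried out. (i) The reduction of all $R^7_j$ to the ideal generated by the $R^5_\bullet$ together with $R^7_1,R^7_2$ is left as ``a single finite computation'' and is not obviously true; it is vacuous for $n=9$ and unverified for larger $n$, where in the paper's treatment the higher-index equations are exactly what cut down the spurious families. (ii) The elimination step, ``check that every solution lies on $\overline{\mathcal S}\cup\ell$'', is where the content of the theorem sits, and as stated it fails for the truncated system you would be eliminating: solving the low-index equations, the paper finds the relevant quintic factors as $\bigl(z-y-\frac{2}{5}\bigr)F(x,y,z)=0$, so besides the surface $F=0$ (which carries $\operatorname{Im} f$, the constant line, and the lines $y=3,z=2$ and $x=-2,y=-3$) there is a genuine extra two-parameter component with $b_4=b_3-\frac{2}{5}$ (the family $M_2$ of Lemma \ref{main_lemma}), together with the sporadic families $M_4^{\pm},M_5^{\pm},M_6^{\pm}$. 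The real work for $n\ge 9$ is to decide which of these survive as the dimension grows: $M_4^{\pm}$ die at dimension $9$, the $M_2$-component survives only at two points past dimension $9$, while $M_5^{\pm}$ and $M_6^{\pm}$ are precisely what become the four deformation families $V^t_{-2,-3}$, $V^t_{1,3-n}$, $V^t_{0,-1}$, $V^t_{-1,-2-n}$ with the exclusions $t\ne 4,6$. Nothing equivalent to this survival analysis appears in your argument. (iii) The degenerate stratum $2b_{i_0}-b_{i_0+1}+1=0$, which is exactly what releases the free parameter $t$, is again handled by assertion (``such a break can occur only next to the ends'').

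For comparison, the paper does the classification explicitly in dimension $8$ (Lemma \ref{main_lemma}), passes to dimension $9$ (Corollary \ref{main_graded_9}), and then inducts by requiring that both the quotient coordinates $(b_1,\dots,b_{n-2})$ and the submodule coordinates $(b_2,\dots,b_{n-1})$ already appear on the lower-dimensional list; some version of this overlap argument, or an honest completion of your resultant computation together with the analysis of which extra components persist, must be supplied before your outline is a proof. Your final uniqueness paragraph (a graded isomorphism is diagonal, preserves $\tilde e_1f_i=f_{i+1}$, hence is a scalar, so the sequence $(b_i)$ is a complete invariant) is correct and unobjectionable.
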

\begin{remark}
1) A one-parametric family $V_{-2,-3}^t(n+1)$ of graded thread $W^+$-modules is a linear deformation of $V_{-2,-3}(n+1)$. Moreover
$V_{-2,-3}^4(n+1)=V_{-2,-3}(n+1)$. A module $V_{1,3{-}n}^{t}(n+1)$ is dual to $V_{-2,-3}^{t}(n+1)$.

2) Family $V_{0,-1}^t(n+1)$ is one-parametric linear deformation of $V_{0,-1}(n+1)$ and 
$V_{0,-1}^6(n+1)=V_{0,-1}(n+1)$. $V_{1,{-}2{-}n}^{t}(n+1)$ is the dual module to $V_{0,-1}^{t}(n+1)$.

3) $C_{1,x}^*(n+1)=C_{1,-x}(n+1)$;
\end{remark}
\begin{proof}
We prove the Theorem  by induction on dimension $\dim{V}$. 
The equation $R_1^5$ appears first time for a
$5$-dimensional graded thread module. In
dimension $6$ we have  two relations $R_1^5, R_2^5$ and coordinates $(b_1,b_2, \dots, b_5)$ of an arbitrary  $7$-dimensional graded thread $W^+$-module $V$ of the type 
$(1,1,\dots,1)$ satisfies three relations $R_1^5, R_2^5, R_1^7$ respectively.

We start with the classification of $8$-dimensional graded
thread $W^+$-modules. 

\begin{lemma}
\label{main_lemma}
Consider an affine variety $M$ of  $8$-dimensional graded
thread $W^+$-modules of the type $(1,1,\dots,1)$ defined by the following system
of quadratic equations in ${\mathbb K}^6$:
\begin{equation}
\label{main_graded}
 \begin{array}{l}
b_4(b_1-b_2)-b_1(b_3-b_4)=b_1-3b_2+3b_3-b_4,\\
b_5(b_2-b_3)-b_2(b_4-b_5)=b_2-3b_3+3b_4-b_5,\\
b_6(b_3-b_4)-b_3(b_5-b_6)=b_3-3b_4+3b_5-b_6,\\
b_6(b_1{-}3b_2{+}3b_3{-}b_4)-b_1(b_3{-}3b_4{+}3b_5{-}b_6)=\frac{9}{10}(b_1{-}5b_2{+}10b_3{-}10b_4{+}5b_5{-}b_6),
\end{array} 
\end{equation}
then the variety $M$  can be decomposed as the union of the
following two- and one-parametric algebraic subsets:

\begin{itemize}
\label{main_graded_answer}
\item[$M_1:$]  $b_i=6\frac{(u+i)}{(v+i)(v+i+1)},
\; i=1,2,\dots,6,$\; $u\ne v, u\ne v{+}1, \; v \ne
{-}1,{-}2,\dots,{-}7$;
\item[$M_1^0:$] $b_i=\frac{6}{v{+}i},
\; i=1,2,\dots,6,$\; $v\ne {-}1,{-}2,\dots,{-}6$;
\item[$M_2:$] $\begin{array}{l} b_1=\frac{5xy-17x+10y+2}{5y-9}, \; b_2=x, \; b_3=y, \; b_4=y-\frac{2}{5}, \; b_5=\frac{1}{5}\frac{5xy+3x-6}{2x-y+1},\\
b_6=\frac{5xy^2-2xy-22y+10y^2+21x-12}{(2x-y+1)(5y+7)},\; y\ne
\frac{9}{5},{-}\frac{7}{5}, 2x-y+1\ne 0\end{array}$;
\item[$M_3:$]
$b_1=b_2=b_3=b_4=b_5=b_6=t$;
\item[$M_4^{\pm}:$] $\begin{array}{l} b_1=12\pm3\sqrt{19}, b_2=-\frac{2}{5}\pm\frac{1}{5}\sqrt{19}, b_3=\frac{1}{5}\pm\frac{2}{5}\sqrt{19},
b_4=-\frac{1}{5}\pm\frac{2}{5}\sqrt{19}, \\ b_5=\frac{2}{5}\pm\frac{1}{5}\sqrt{19}+t,
b_6=-12\pm3\sqrt{19}+t(\pm\frac{4}{3}\sqrt{19}-\frac{13}{3}).\end{array}$;
\item[$M_5^-:$] $b_1=-\frac{27}{28}, b_2=-\frac{8}{7},
b_3=-\frac{7}{5}, b_4=-\frac{9}{5}, b_5=-\frac{5}{2}, b_6=t$;
\item[$M_5^+:$] $b_1=t, b_2=\frac{5}{2},
b_3=\frac{9}{5}, b_4=\frac{7}{5}, b_5=\frac{8}{7},
b_6=\frac{27}{28}$;
\item[$M_6^+:$] $b_1=t, b_2=\frac{6}{2},
b_3=\frac{6}{3}, b_4=\frac{6}{4},b_5=\frac{6}{5},
b_6=\frac{6}{6}$;
\item[$M_6^-:$] $b_1=-\frac{6}{6}, b_2=-\frac{6}{5},
b_3=-\frac{6}{4}, b_4=-\frac{6}{3}, b_5=-\frac{6}{2}, b_6=-t$;
\end{itemize}
\end{lemma}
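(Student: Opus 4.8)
The plan is to treat the system (\ref{main_graded}) as an explicit set of four polynomial equations in $b_1,\dots,b_6$ and to solve it by a structured elimination that uses two features of the configuration. First, each of $R^5_1,R^5_2,R^5_3$ is affine-linear in the highest-index variable occurring in it: rewriting $R^5_i$ as
\[
b_{i+3}\bigl(2b_i-b_{i+1}+1\bigr)=b_ib_{i+2}+b_i-3b_{i+1}+3b_{i+2},\qquad i=1,2,3,
\]
we see that $b_{i+3}$ is a rational function of $b_i,b_{i+1},b_{i+2}$ whenever the pivot $c_i:=2b_i-b_{i+1}+1$ is nonzero. Second, $M$ carries the duality involution $\iota\colon(b_1,\dots,b_6)\mapsto(-b_6,-b_5,-b_4,-b_3,-b_2,-b_1)$ coming from $V\mapsto V^{*}$ (combine the description of $V^{*}$ in Section~2 with $b_i=6\beta_i/(\alpha_i\alpha_{i+1})$); it fixes $M_1,M_1^0,M_2,M_3$ and swaps $M_4^{+}\leftrightarrow M_4^{-}$, $M_5^{+}\leftrightarrow M_5^{-}$, $M_6^{+}\leftrightarrow M_6^{-}$, so it is enough to find the solutions up to $\iota$.

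I would first dispose of the easy direction by substitution: $M_1$ and $M_1^0$ satisfy (\ref{main_graded}) for free, since $V_{\lambda,\mu}$ is a genuine $W^{+}$-module and its coordinates $b_i=6(u+i)/((v+i)(v+i+1))$ therefore satisfy $R^5$ and $R^7$; $M_3$ (the constant vector) is visibly annihilated by every $R^5$ and $R^7$; the remaining families are checked directly. For the converse, run a case analysis on which of $c_1,c_2,c_3$ vanish. In the generic branch $c_1\neq0$, solve $R^5_1$ for $b_4$, substitute into $R^5_2$ and solve for $b_5$, then into $R^5_3$ and solve for $b_6$; after clearing denominators, $R^7_1$ becomes a single polynomial equation in $b_1,b_2,b_3$ only. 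The heart of the matter is that this polynomial factors and that its zero locus, pulled back through the substitutions, is exactly $M_1\cup M_1^0\cup M_2\cup M_3$: one irreducible factor is the image of $(u,v)\mapsto(b_1,b_2,b_3)$, identified by inverting that map --- a M\"obius-type calculation that separates the generic stratum from $u=v+1$ (which gives $M_1^0$) and from the limit locus $b_1=b_2=b_3$ (which gives $M_3$) --- and the complementary factor is $M_2$, on which $(b_2,b_3)=(x,y)$ stay free. In the non-generic branches there is at least one further affine relation $c_i=0$ among the $b_i$; combined with the surviving equations these collapse the solution set onto the one-parameter families $M_5^{\pm},M_6^{\pm}$ and the sporadic $M_4^{\pm}$, the discriminant $19$ being precisely what emerges when two pivot conditions are forced at once. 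Throughout one keeps track of which solutions are gained or lost when a pivot is cleared, and checks that distinct families meet only along lower-dimensional loci --- indeed $M_1^0$ and $M_3$ lie in the Zariski closure of $M_1$, which is why the corresponding non-isomorphic modules are still listed separately in Theorem~\ref{first_main}.

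The principal obstacle is the size of the generic-branch elimination: after the three substitutions $R^7_1$ is a polynomial of fairly high degree in three variables, and the crucial --- in practice machine-assisted --- step is to factor it into exactly the advertised components; the craft lies in choosing the order of eliminations and organizing the pivot bookkeeping so the intermediate expressions stay tractable. A subtler, more conceptual point is that $M_1$ and $M_1^0$ are given by a parametrization rather than by equations, so one must confirm that the relevant irreducible factor of $R^7_1$ is precisely the closure of the image of $(u,v)\mapsto(b_i)$ and nothing larger, and that clearing the pivots has not manufactured a spurious two-dimensional component.
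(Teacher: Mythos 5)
Your strategy is in essence the paper's own proof: exploit the fact that each $R^5_i$ is affine--linear in its extreme variable to eliminate three of the six unknowns, reduce $R^7_1$ to a single polynomial equation in the remaining three, factor it, and then treat separately the loci where the pivots vanish. The paper does exactly this, only it retains $(b_2,b_3,b_4)$ and eliminates $b_1,b_5,b_6$, which is why the factor $b_4=b_3-\frac{2}{5}$ (hence $M_2$) pops out at once; your choice of retaining $(b_1,b_2,b_3)$ is a harmless variant of the same elimination. So the skeleton is sound, but it is not a different route.

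The difficulty is that your concrete claims about where the listed families arise are wrong, and they concern precisely the part of the argument that carries the content of the lemma. (i) In your generic branch ($c_1c_2c_3\neq 0$) the zero locus of the eliminated polynomial is strictly larger than $M_1\cup M_1^0\cup M_2\cup M_3$: take $(b_1,b_2,b_3)=(t,\frac{5}{2},\frac{9}{5})$; then $c_1=2t-\frac{3}{2}$, $c_2=\frac{21}{5}$, $c_3=\frac{16}{5}$, so for generic $t$ all pivots are nonzero, and your forward elimination gives $b_4=\frac{7(4t-3)}{5(4t-3)}=\frac{7}{5}$, then $b_5=\frac{8}{7}$, $b_6=\frac{27}{28}$, independently of $t$ --- this is the family $M_5^+$, which is not contained in $M_1\cup M_1^0\cup M_2\cup M_3$; similarly $(t,3,2)$ produces $M_6^+$. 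Thus your assertion that the generic stratum is ``exactly'' $M_1\cup M_1^0\cup M_2\cup M_3$ and that $M_4^{\pm},M_5^{\pm},M_6^{\pm}$ all come from vanishing pivots is false: in your scheme $M_5^-,M_6^-$ have $c_3=0$ and $M_4^{\pm}$ has $c_2=0$, but $M_5^+,M_6^+$ live in the generic branch. (ii) The duality bookkeeping you invoke to halve the work is also off: $\iota$ does not interchange $M_4^+$ and $M_4^-$ --- a generic point of $M_4^+$ is sent into $M_2$, and the paper's remark only states $\sigma(M_2\cup M_4^{\pm})=M_2\cup M_4^{\pm}$ --- so recovering the families you lose in (i) by duality requires an argument you have not given. (iii) Finally, the identification of the components of the eliminated polynomial with the closure of the image of $(u,v)\mapsto (b_i)$, including the exceptional lines lying on that surface and the degenerate parameter values where the lift fails or acquires a free parameter, is exactly the long case-by-case analysis that occupies the paper's proof (the description of $M_F=\mathrm{Im}\,f\cup l_1\cup l_2\cup l_3$ and the subsequent propositions); you defer all of it to an unspecified machine factorization while announcing an answer that, by (i), is not the correct one. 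Until that analysis is actually carried out, the proposal does not prove the lemma.
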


\begin{proof}
Denote $b_2=x, b_3=y, b_4=z$ and rewrite after that first two equations of
the system (\ref{main_graded}) 
\begin{equation}
\left\{ \begin{array}{l} b_1(2z-y-1)=xz-3x+3y-z,\\
b_5(2x-y+1)=xz+x-3y+3z.
\end{array} \right.
\end{equation}
Then we multiply the third equation of (\ref{main_graded}) by $(2x-y+1)$ and exclude
$b_5$
$$
(2x-y+1)(2y-z+1)b_6=xyz-4y^2+3yx+6yz-8y-3xz+6z+3x.
$$
Finally we multiply the last equation of (\ref{main_graded}) by
$(2z{-}y{-}1)(2x{-}y{+}1)(2y{-}z{+}1)$ and exclude $b_1, b_5, b_6$. We
will get the following fifth-order equation of
unknowns $x,y,z$:
$$
\left(z-y-\frac{2}{5}\right)F(x,y,z)=0,
$$
where
\begin{equation}
\label{F_function}
F(x,y,z)=y^2(z{-}6)(x{+}6)+y(x{+}z)(xz{+}3z{-}3x{+}36)+3xz(4x{-}4z{-}xz)-9(x{+}z)^2.
\end{equation}
First of all we are going to study an algebraic variety $M_F
\subset {\mathbb K}^3$ defined by the equation $F(x,y,z)=0$. 
Consider  mapping $f: {\mathbb K}^2\backslash \{ (u,v), v=0,{-}1,{-}2,{-}3\} \to {\mathbb K}^3$:
$$
f: (u,v) \to
\left(\frac{6u}{v(v{+}1)},\frac{6(u{+}1)}{(v{+}1)(v{+}2)},\frac{6(u{+}2)}{(v{+}2)(v{+}3)}\right).
$$
\begin{proposition}
The variety $M_F \subset {\mathbb K}^3$ is the
union of  $Im f$ and  three lines $l_1, l_2, l_3$ defined by
$$
l_1: x=y=z;\quad l_2: \left\{ \begin{array}{c} y=3,\\ z=2
\end{array}\right. ,\quad l_3: \left\{ \begin{array}{c} x=-2, \\y=-3,
\end{array}\right.
$$
\end{proposition}
\begin{proof}
Let a point $(x,y,z) \in Im f$. It means that for some  $(u,v), v\ne 0,{-}1,{-}2,{-}3$ we have
\begin{equation}
\label{image_f} \left\{ \begin{array}{l}
6u=x(v^2+v),\\
6u+6=y(v^2+3v+2),\\
6u+12=z(v^2+5v+6). \end{array}\right.
\end{equation}

Consider (\ref{image_f}) as a system of equations with respect to unknowns $u,v$. 
For $z \ne x$  it is equivalent to 
\begin{equation}
\label{image_ff} 
\left\{ \begin{array}{l}
6u=x(v^2+v),\\
(x{-}z)v^2+(x{-}5z)v+12-6z=0,\\
\left((3y{-}x)(z{-}x)-(5z{-}x)(y{-}x)\right)v=(6z{-}12)(y{-}x)-(2y{-}6)(z{-}x).
\end{array}\right.
\end{equation}

1) Let $(3y{-}x)(z{-}x)-(5z{-}x)(y{-}x)=2(2xz-y(x+z))\ne 0$, then substituting
$$
v=\frac{2yz-3zx-6y+3x+yx+3z}{2xz-y(x{+}z)}
$$
in the second equation of (\ref{image_ff}) we get
$$
\frac{(x-z)F(x,y,z)}{(y(x{+}z)-2xz)^2}=0,
$$
where the polynomial $F(x,y,z)$ is defined by (\ref{F_function}).
Hence a point $(x,y,z)$ of the surface $M_F$ with $x\ne z$ and
$y(x{+}z)\ne 2xz$ is in the image $Im f$ and the corresponding
parameters $u,v$ are determined  uniquely.

2) Consider a point $(x,y,z)\in M_F$ such that
\begin{equation}
\label{image_fff} 
\left\{ \begin{array}{l}
(6z-12)(y-x)-(2y-6)(y-x)=0,\\
2xz-y(x+z)=0.
 \end{array} \right.
\end{equation}
This system can be rewritten in a following way
$$ \left\{ \begin{array}{l}
y(z-6)=xz-3x-3z,\\
y(x+z)=2xz.
\end{array} \right.
$$
Substitute $y=\frac{2xz}{x+z}$ ($x+z=0$ implies $x=z=0$) in the first
equation. We obtain $(z{-}x)(xz{+}3z{-}3x){=}0$, hence if $z \ne x$ then
$$
x=\frac{3z}{3-z},\; y=\frac{6z}{6-z}.
$$
Changing parameter $z=\frac{6}{t+2}$ we see that the set of solutions of
(\ref{image_fff}) coincides  with the curve
\begin{equation}
\label{gamma_curve}
\gamma(t)= \left(\frac{6}{t},
\frac{6}{t+1},\frac{6}{t+2}\right).
\end{equation}
Parameters $u,v$ for a point $\left(\frac{6}{t},
\frac{6}{t+1},\frac{6}{t+2}\right)$  of $\gamma$ are determined not
uniquely: $u=v=t$ or $u=v+1=t+2$.

Now let us consider the case $x=z$. Then the system (\ref{image_ff})
is equivalent to the following one
\begin{equation}
\label{image_2} \left\{ \begin{array}{l}
6u=x(v^2+v),\\
(y-x)v^2+(3y-x)v+2y-6=0,\\
2xv=6-3x.
\end{array}\right.
\end{equation}
Substituting $v$  by $\frac{6-3x}{2x}$ in the second equation of
(\ref{image_2}), we'll get
$$
\frac{y(36-x^2)-36x-3x^3}{4x^2}=0.
$$
It follows from the last equation that
$y=\frac{36x+3x^3}{36-x^2}, x\ne 6$.

On the another hand, the square equation (with respect to $y$, we assume also that $x \ne \pm 6$)
$$
F(x,y,x)=y^2(x^2{-}36)+2x(x^2{+}36)y-3x^4-36x^2=0
$$
has two roots $y=x$ and $y=\frac{36x+3x^3}{36-x^2}$. One has to
remark also that if $x=z=\pm 6$, then $y=\pm 6$.

Hence an arbitrary point $P=(x,y,z)$ of the surface $M_F$ with
$x=z \ne y$ also belongs to the image $Im f$. We conclude with a
remark that the system (\ref{image_f}) never has the solutions
with $v=-1, -2$, but it has the solutions with $v=0$ and $v=-3$.
More precisely, if $v=0$ then it follows from the first equation of
(\ref{image_f}) that $u=0$ and this is the case for $y=3,
z=2$ and arbitrary $x$. We get the line $l_2$. 

Analogously the case $v=-3$ implies $u=-2$
and $y=-3, x=-2$. The corresponding set of solutions is $l_3$. 

\end{proof}
\begin{figure}[t]
\includegraphics[scale=.65]{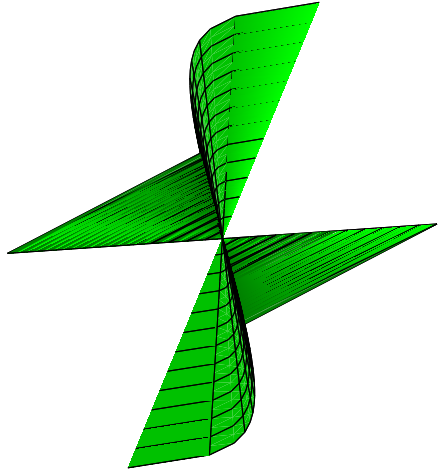}
%
%
\caption{The surface $M_F$ in ${\mathbb R}^3$.
}
\label{graphic_image}       
\end{figure}
\begin{remark}
The polynomial $F(x,y,z)$ has the degree two with respect to each
variable $x,y,z$ (thinking other two are parameters). One can verify directly that the curve $\gamma(t)$ defined by (\ref{gamma_curve})
coincides with the set of singular points of $M_F$ (it follows
from the proof of our proposition). Also one can see that $M_F$
has an involution $\sigma_F: M_F \to M_F$:
$$
\sigma_F: (x,y,z) \to (-z,-y,-x).
$$
More precisely 
$$
\sigma_F(f(u,v))=f({-}u{-}2, {-}v{-}4), \; \sigma_F(l_1)=l_1, \;  \sigma_F(l_2)=l_3.
$$
\end{remark}
Now we came to the following natural question. 

{\it Does an
arbitrary point $(x,y,z) \in M_F$ correspond to some solution
$(b_1,x,y,z,b_5,b_6)$ of the initial system (\ref{main_graded})?}

Before answering this question, we state a few preliminary remarks
\begin{proposition}
It exists the unique solution $(b_1,b_2,\dots, b_6)$ of the system
(\ref{main_graded}) with $b_2=b_3=b_4=t$ and it is
$b_1=b_2=b_3=b_4=b_5=b_6=t$.
\end{proposition}
\begin{proof}
If $b_2=b_3=b_4=t$ then the first two equations of (\ref{main_graded}) can be rewritten as
$$
b_1(t-1)=t^2-t, \quad b_5(t+1)=t^2+t.
$$
If $t\ne \pm 1$ then $b_1=b_5=t$ and
$b_6=\frac{t^3+2t^2+t}{(t+1)^2}=t$.

If $t=1$ then $b_5=b_6=1$ and the fourth equation of
(\ref{main_graded}) will look in the following way:
$$
(b_1-1)-\frac{9}{10}(b_1-1)=0.
$$
Hence $b_1=1$. The case $t=-1$ is studied analogously.

\end{proof}
\begin{proposition}
There are no solutions $(b_1,b_2,\dots, b_6)$ of the system
(\ref{main_graded}) such that:
\begin{itemize}
\item[a)] $b_3=3, b_4=2$;
\item[b)] $b_3=-3, b_2=-2$.
\end{itemize}
\end{proposition}
\begin{proof}
Solving first three equations of (\ref{main_graded}) with $b_3=3,
b_4=2$ one get 
$$b_2=7, b_5=\frac{3}{2}, b_6=\frac{6}{5}.$$
But the fourth equation of (\ref{main_graded})  looks as $0\cdot
b_6=-\frac{339}{10}$.

If $b_3=-3$ and $b_2=-2$ we obtain $b_1=-\frac{3}{2}, b_4=-7,
b_6=-9$ and the last equation of  (\ref{main_graded})  will be inconsistent with respect to
$b_5$.

\end{proof}
Now we are going to study the same question for the points of
$Imf$. Namely let suppose that for some choice of $u,v, v \ne
0,-1,-2,-3,$ we have
$$b_2=\frac{6u}{v(v{+}1)}, \; b_3=\frac{6(u{+}1)}{(v{+}1)(v{+}2)},
\; b_4=\frac{6(u{+}2)}{(v{+}2)(v{+}3)}.$$ Then the first three
equations of the system (\ref{main_graded}) we rewrite as
\begin{equation}
\label{new_main}
\begin{split}
b_5\frac{(v{+}4)(6u{+}v^2{-}v)}{v(v{+}1)(v{+}2)}=6\frac{(u{+}3)(6u{+}v^2{-}v)}{v(v{+}1)(v{+}2)(v{+}3)},\\
b_1\frac{(v{-}1)(6u{-}v^2{-}7v)}{(v{+}1)(v{+}2)(v{+}3)}=6\frac{(u{-}1)(6u{-}v^2{-}7v)}{v(v{+}1)(v{+}2)(v{+}3)},\\
b_6\frac{(v{+}5)(6u{+}v^2{+}v{+}6)}{(v{+}1)(v{+}2)(v{+}3)}=
b_5\left(
\frac{6(u{+}1)}{(v{+}1)(v{+}2}+3\right)-6\frac{2uv+5v+3}{(v{+}1)(v{+}2)(v{+}3)}.
\end{split}
\end{equation}
First of all let study a generic case.
\begin{proposition}
Let $v \ne 0,-1,-2,-3$ and moreover
$$
(v{+}4)(6u{+}v^2{-}v)\ne 0, \; (v{-}1)(6u{-}v^2{-}7v)\ne 0,\;
(v{+}5)(6u{+}v^2{+}v{+}6)\ne 0.
$$
Then there exists the only one solution $(b_1,b_5, b_6)$ of
the system (\ref{new_main})
$$b_1=\frac{6(u{-}1)}{(v{-}1)v}, \; b_5=\frac{6(u{+}3)}{(v{+}3)(v{+}4)},
\; b_6=\frac{6(u{+}4)}{(v{+}4)(v{+}5)}.$$
\end{proposition}
\begin{proof}
Direct verification. 
\end{proof}
\begin{proposition}
Let $v \ne 0,-1,-2,-3$. Then if one between of the three given
expressions
$$
6u+v^2-v, \; 6u-v^2-7v, \; 6u+v^2+v+6,
$$
is equal to zero then the rest two of them are non trivial.
\end{proposition}

Now we assume that $v \ne 1,0,-1,-2,-3,-4,-5$.

1) $6u-v^2-7v=0$. Then from the first and the third equations of
(\ref{main_graded}) we have
$$
b_5=\frac{6(u{+}3)}{(v{+}3)(v{+}4)},\;
b_6=\frac{6(u{+}4)}{(v{+}4)(v{+}5)}.
$$
The fourth equation of (\ref{main_graded}) will look as
$$
\frac{1}{10}b_1\frac{v(v{-}1)(v{-}2)(v{-}3)}{(v{+}2)(v{+}3)(v{+}4)(v{+}5)}=
\frac{1}{10}\frac{(v{-}2)(v{-}3)(v^2+7v-6)}{(v{+}2)(v{+}3)(v{+}4)(v{+}5)}.
$$
If $v\ne 2,3$  then
$b_1=\frac{v^2{+}7v{-}6}{v(v{-}1)}=\frac{6(u{-}1)}{v(v{-}1)}$ like
in generic situation.

a) Now let $v=2$ then $u=\frac{v^2+7v}{6}=3$ and we have
$$
b_2=3, b_3=2, b_4=\frac{3}{2}, b_5=\frac{6}{5}, b_6=1.
$$
The component $b_1$ can take an arbitrary value $t$.

b) In the case $v=3$ analogously we obtain $u=5$ and
$$
b_2=\frac{5}{2}, b_3=\frac{9}{5}, b_4=\frac{7}{5},
b_5=\frac{8}{7}, b_6=\frac{27}{28}.
$$
We obtain another one-parametric family of solutions for
(\ref{main_graded}):
$$
(b_1,b_2,\dots,b_6)=\left(t, \frac{5}{2}, \frac{9}{5},
\frac{7}{5}, \frac{8}{7}, \frac{27}{28}\right).
$$

2) $6u+v^2-v=0$. Then $b_1=\frac{6(u{-}1)}{(v{-}1)v}$ and the
third equation of (\ref{main_graded}) will look as
$$
b_6(v{+}5)=b_5(v^2{+}4v{+}3)-v^2+4v+3.
$$
Expressing $b_6$ and rewriting the last equation of
(\ref{main_graded}) we will have
$$
-\frac{1}{10}b_5\frac{(v{+}4)(v{+}3)(v^2{+}8v{-}3)}{(v{+}1)v(v{-}1)}=
\frac{1}{10}\frac{(v^2{+}8v{-}3)(v^2{-}v{-}18)}{v(v{-}1)(v{+}1)}.
$$
If $v^2+8v-3\ne 0$ then $b_5$ is determined uniquely and
hence we have
$$
b_5=\frac{6(u{+}3)}{(v{+}3)(v{+}4)},\;
b_6=\frac{6(u{+}4)}{(v{+}4)(v{+}5)}.
$$
If $v^2+8v-3= 0$, i.e. $v=-4\pm \sqrt{19}$, then
the component $b_5$ can take an arbitrary values and then we have
$u=-\frac{13}{2}\pm\frac{3}{2}\sqrt{19}$ and
\begin{eqnarray*}
b_1=12\pm 3\sqrt{19}, \;\;
b_2=-\frac{2}{5}\pm\frac{1}{5}\sqrt{19},\;\;
b_3=\frac{1}{5}\pm\frac{2}{5}\sqrt{19},\\
b_4=-\frac{1}{5}\pm\frac{2}{5}\sqrt{19}, b_5=t,
b_6=-\frac{46}{3}\pm \frac{10\sqrt{19}}{3}
+t\left(\pm\frac{4}{3}\sqrt{19}-\frac{13}{3}\right).
\end{eqnarray*}
After a parameter change $t \to t+\frac{2}{5}\pm \frac{1}{5}\sqrt{19}$ we
will obtain the final version.

 3) $6u+v^2+v+6=0$. Then $b_1=\frac{6(u{-}1)}{(v{-}1)v}$ and
$b_5=\frac{6(u{+}3)}{(v{+}3)(v{+}4)}$. The fourth equation of the
system (\ref{main_graded}) will look as
$$
-\frac{1}{10}b_6\frac{(v{+}5)(v{+}4)(v{+}7)(v{+}6)}{(v{+}1)(v{+}2)(v{-}1)v}=
\frac{1}{10}\frac{(v{+}7)(v{+}6)(v^2{-}v{-}18)}{v(v{-}1)(v{+}1)(v{+}2)}.
$$
If $v \ne -6, -7$ then $b_6=\frac{6(u{+}4)}{(v{+}4)(v{+}5)}$.

For $v=-6, u=-6$ we get an one-parametric family
$$
(b_1,b_2,\dots,b_6)=\left(-1, -\frac{6}{5}, -\frac{6}{4},
-\frac{6}{3}, -\frac{6}{2}, t\right).
$$
The case $v=-7, u=-8$ corresponds to another one  line of
solutions:
$$
(b_1,b_2,\dots,b_6)=\left(-\frac{27}{28}, -\frac{8}{7},
-\frac{7}{5}, -\frac{9}{5}, -\frac{5}{2}, t\right).
$$
4) Let $v=-4$ then the first equation implies that $u=-3$ or
$u=-\frac{10}{3}$.

a) Let $v=-4, u=-3$, then $b_1=-\frac{6}{5}, b_2=-\frac{3}{2}, b_3=-2,
b_4=-3$. The third equation gives $b_5=-7$ but the fourth equation
becomes inconsistent with respect to $b_6$.

b) Let $v=-4, u=-\frac{10}{3}$, then third and fourth equations of
(\ref{main_graded}) look as
$$
\left\{ \begin{array}{l}
\frac{1}{3}b_6-\frac{2}{3}b_5-\frac{29}{3}=0,\\
\frac{3}{10}b_6-\frac{3}{5}b_5-\frac{2629}{300}=0,\\
\end{array} \right.
$$
and this system is inconsistent with respect to $b_5, b_6$.

5) $v=1$, the the second equation of (\ref{main_graded}) implies
that $u=1$ or $u=\frac{4}{3}$.

a) The case $v=u=1$ corresponds to the family
$$
(b_1,b_2,\dots,b_6)=\left(t, 3, 2, \frac{3}{2}, \frac{6}{5},
1\right).
$$

b) If $v=1, u=\frac{4}{3}$ the fourth equation of
(\ref{main_graded}) degenerates to $-\frac{1}{900}=0$ and hence
the system (\ref{main_graded}) is inconsistent.

6) Let $v=5$, the first equation of (\ref{main_graded}) gives us
$b_5=3(u{+}3)$, the third equation after substitution $v=-5,
b_5=3(u{+}3)$ will look like
$$
\frac{3}{2}u^2+\frac{25}{2}u+26=0.
$$
It has the roots $u=-\frac{13}{3}, -4$.

a) The case $v=-5, u=-4$ corresponds to the family that we already
obtained:
$$
(b_1,b_2,\dots,b_6)=\left(-1, -\frac{6}{5}, -\frac{6}{4},
-\frac{6}{3}, -\frac{6}{2}, t\right).
$$

b) If $v=-5, u=-\frac{13}{3}$ then $b_1=-\frac{16}{15},
b_2=-\frac{13}{10}, b_3=-\frac{5}{3}, b_4=-\frac{7}{3}, b_5=-4$
and the fourth equation is inconsistent with respect to $b_6$.

We studied the solutions of the main system (\ref{main_graded})
that correspond to the points of algebraic variety
$M_F=\{F(x,y,z)=0\}$. Now we will consider the case
$$
b_2=x, \quad b_3=y, \quad b_4=y-\frac{2}{5}=z.
$$
Then one can rewrite the first three equations of
(\ref{main_graded}) as
\begin{equation}
\label{z=y-2/5}
 \left\{ \begin{array}{l}
(y{-}\frac{9}{5})b_1=xy+2y-\frac{17}{5}x+\frac{2}{5},\\
(2x{-}y{+}1)b_5=xy+\frac{3}{5}x-\frac{6}{5},\\
(y{+}\frac{7}{5})b_6=b_5(y{+}3)-2y+\frac{6}{5}.\\
\end{array} \right.
\end{equation}
\begin{proposition}
There are no solutions of  (\ref{z=y-2/5}) with $y=\frac{9}{5},
-\frac{7}{5}$.
\end{proposition}
\begin{proof}
Direct calculations. 
\end{proof}
\begin{proposition}
Let $2x-y+1=0$ then the system (\ref{z=y-2/5}) is consistent if
and only if
$$
x=-\frac{2}{5}\pm\frac{1}{5}\sqrt{19},\;
y=\frac{1}{5}\pm\frac{2}{5}\sqrt{19},\;
z=y-\frac{2}{5}=-\frac{1}{5}\pm\frac{2}{5}\sqrt{19}, b_5=t.
$$
\end{proposition}
\begin{proof}
Indeed $2x-y+1=0$ implies $xy+\frac{3}{5}x-\frac{6}{5}=0$ that is
equivalent to $2x^2+\frac{8}{5}x-\frac{6}{5}=0$. The roots of this
square equation will give us the values given above. Now one can
express $b_1=12\pm 3\sqrt{19}$, take $b_5=t$, then express $b_6$
in terms of $b_5$ taking into account  third equation. Finally we
obtain 
$$
b_6=-\frac{46}{3}\pm \frac{10\sqrt{19}}{3}
+t\left(\pm\frac{4}{3}\sqrt{19}-\frac{13}{3}\right).
$$
That
corresponds to the family of solutions that we have already
obtained.

\end{proof}
Now it is easy to see that in generic situation ($2x-y+1\ne
0$) we have
\begin{eqnarray*}
b_1=\frac{5xy-17x+10y+2}{5y-9},\; b_2=x, \;b_3=y, \; b_4=y-\frac{2}{5},\\
b_5=\frac{1}{5}\frac{5xy+3x-6}{2x-y+1}, \;\;
b_6=\frac{5xy^2-2xy-22y+10y^2+21x-12}{(2x-y+1)(5y+7)}.
\end{eqnarray*}
The last statement concludes the proof of the theorem.
\end{proof} 
\begin{remark}
\label{peresechenie}
We shifted the arguments $u,v$ in our answer
for $M_1$, for instance we considered $b_2=\frac{6u}{v(v{+}1)}$ in
our proof and now $b_2=\frac{6(u{+}2)}{(v{+}2)(v{+}3)}$. One has
to point out some properties of the subsets $M_i$:

\begin{itemize}
\item $M_3$ does not intersect other subsets $M_i$;
\item $P(u,v) {\in} M_1$ belongs to $M_2$ if and only if
$u{=}\frac{1}{30}(v{+}3)(v{+}4)(v{+}5){+}\frac{1}{2}(v{-}3)$;
\item $M_1^{0}$ intersects $M_2$ at $v=\frac{{-}7{\pm}\sqrt{61}}{2}$;
\item $M_4^{\pm}$ intersects $M_1$ at $t{=}{-}\frac{16}{15}\pm \frac{68}{285}\sqrt{19}$
($P_4^{\pm}{=}P({-}\frac{9}{2}{\pm}\frac{3}{2}\sqrt{19},-2{\pm}\sqrt{19})$
and does not intersect other subsets $M_i$;
\item $M_5^{\pm}$ intersects $M_1$ at $t=\pm 4$ (the points $P_5^+{=}P(3,1)$ and $P_5^-{=}P({-}10,{-}9)$ respectively);
\item $M_5^{\pm}$ intersects $M_2$ at $t=0$;
\item $M_6^{\pm}$ intersects $M_1^0$ at $t{=}{\pm} 6$ and $v=0$
and does not intersect other subsets;
\end{itemize}
One can also remark that there exists an involution $\sigma: M \to
M$
$$
\sigma:(b_1,b_2,\dots,b_5,b_6) \to (-b_6,-b_5,\dots,-b_2,-b_1)
$$
with the properties:
\begin{itemize}
\item $M_1$ is invariant with respect to $\sigma$:
$\sigma(P(u,v))=P({-}u{-}7,{-}v{-}8)$;
\item $\sigma(M_1^0)=M_1^0$;
\item $\sigma(M_2 {\cup} M_4^{\pm})=M_2 {\cup} M_4^{\pm}$;
\item $\sigma(M_3)=M_3$;
\item $\sigma(M_5^-)=M_5^+$;
\item $\sigma(M_6^-)=M_6^+$;
\end{itemize}
\end{remark}

\begin{corollary}
\label{main_graded_9} The affine variety of $9$-dimensional graded thread
$W^+$-modules 
$$
\begin{array}{c}
V=\langle f_1, f_2, \dots, f_9 \rangle,\\
e_1f_i=f_{i{+}1}, \; i=1,2,\dots, 8;\\
e_2 f_j=b_jf_{j{+}2}, \; j=1,2, \dots, 7,
\end{array}
$$
can be parametrized by means of two- and one-parametric algebraic subsets
\begin{itemize}
\item[$\tilde M_1:$] $b_i=6\frac{(u+i)}{(v+i)(v+i+1)},
\; i=1,2,\dots,6,7,$\; $u\ne v, u\ne v{+}1, \; v \ne
{-}1,{-}2,\dots,{-}7,{-}8$;
\item[$\tilde M_1^0:$] $b_i=\frac{6}{v{+}i},
\; i=1,2,\dots,6,7,$\; $v\ne {-}1,{-}2,\dots,{-}6,{-}7$;
\item[$\tilde M_2:$] $\begin{array}{c} b_1{=}\frac{(y{-}\frac{3}{5})(y{+}\frac{3}{5})(y{-}2)}{(y-\frac{9}{5})(y-\frac{7}{5})}, 
b_2{=}\frac{(y{-}\frac{8}{5})(y{+}\frac{3}{5})}{(y-\frac{9}{5})},
 b_3{=}y{+}\frac{2}{5},  b_4{=}y,  b_5{=}y{-}\frac{2}{5},\\
b_6{=}\frac{(y{+}\frac{8}{5})(y{-}\frac{3}{5})}{(y{+}\frac{9}{5})},
b_7{=}\frac{(y{-}\frac{3}{5})(y{+}\frac{3}{5})(y{+}2)}{(y{+}\frac{9}{5})(y{+}\frac{7}{5})} \end{array}$;
\item[$\tilde M_3:$]
$\; b_1=b_2=b_3=b_4=b_5=b_6=b_7=t$;
\item[$\tilde M_5^-:$]  $b_1=-\frac{5}{6}, b_2=-\frac{27}{28}, b_3=-\frac{8}{7},
b_4=-\frac{7}{5}, b_5=-\frac{9}{5},b_6=-\frac{5}{2}, b_7=t$;
\item[$\tilde M_5^+:$] $b_1=t, b_2=\frac{5}{2},
b_3=\frac{9}{5}, b_4=\frac{7}{5},b_5=\frac{8}{7},
b_6=\frac{27}{28}, b_7=\frac{5}{6}$;
\item[$\tilde M_6^+:$] $b_1=t, b_2=\frac{6}{2},
b_3=\frac{6}{3}, b_4=\frac{6}{4},b_5=\frac{6}{5}, b_6=\frac{6}{6},
b_7=\frac{6}{7}$;
\item[$\tilde M_6^-:$] $b_1=-\frac{6}{7}, b_2=-\frac{6}{6}, b_3=-\frac{6}{5},
b_4=-\frac{6}{4}, b_5=-\frac{6}{3},b_6=-\frac{6}{2}, b_7=-t$.
\end{itemize}
\end{corollary}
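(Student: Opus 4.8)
The plan is to bootstrap the $9$-dimensional classification from the $8$-dimensional one, Lemma~\ref{main_lemma}, using the subquotient structure of graded thread modules. Write the structure constants of a $9$-dimensional module $V=\langle f_1,\dots,f_9\rangle$ of the type $(1,\dots,1)$ as $(b_1,\dots,b_7)$. The quotient $V/\langle f_9\rangle=\langle f_1,\dots,f_8\rangle$ is then an $8$-dimensional module of the same type with constants $(b_1,\dots,b_6)$, and the submodule $\langle f_2,\dots,f_9\rangle$ is, after renumbering the basis, an $8$-dimensional module with constants $(b_2,\dots,b_7)$. Moreover the full defining system of $V$, namely $R^5_1,R^5_2,R^5_3,R^5_4,R^7_1,R^7_2$, is the union of $\{R^5_1,R^5_2,R^5_3,R^7_1\}$ (the relations of the first piece, which involve only $b_1,\dots,b_6$) and $\{R^5_2,R^5_3,R^5_4,R^7_2\}$ (the relations of the second piece, which involve only $b_2,\dots,b_7$). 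Hence a point $(b_1,\dots,b_7)$ defines a $9$-dimensional module precisely when both of its truncations $(b_1,\dots,b_6)$ and $(b_2,\dots,b_7)$ lie in the variety $M$ described by Lemma~\ref{main_lemma}. I would therefore enumerate, for each of the ten possible types $M_1,M_1^0,M_2,M_3,M_4^{\pm},M_5^{\pm},M_6^{\pm}$ that $(b_1,\dots,b_6)$ can have, the values of the extra constant $b_7$ for which $(b_2,\dots,b_7)$ is again a point of $M$.

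For most types this is a short check. If $(b_1,\dots,b_6)\in M_3$, the shared constants are $b_2=b_3=b_4=t$, and the proposition from the proof of Lemma~\ref{main_lemma} stating that $b_2=b_3=b_4=t$ forces all six constants of an $8$-dimensional module equal to $t$, applied to the second piece, gives $b_7=t$, the family $\tilde M_3$. If $(b_1,\dots,b_6)\in M_1$ or $M_1^0$, the five shared constants already lie on the relevant rational curve and the unique continuation inside $M$ is of the same type with the parameters shifted by one step, giving $\tilde M_1$ with $b_7=\frac{6(u+7)}{(v+7)(v+8)}$ and $\tilde M_1^0$ with $b_7=\frac{6}{v+7}$ (the values of $v$ excluded in the statement being exactly those for which this continuation degenerates). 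The two-parametric family $M_2$ shrinks: requiring the last six constants $(b_2,\dots,b_7)$ to lie again in $M$ imposes one algebraic relation between its two parameters — forcing $b_3,b_4,b_5$ into an arithmetic progression with common difference $-\tfrac25$, as in the internal structure of $M_2$ — so that only the one-parametric $\tilde M_2$ survives. Finally, $M_5^{\pm}$ and $M_6^{\pm}$ carry their free parameter at an extreme coordinate, so $M_5^{+},M_6^{+}$ as the first piece have rigid shared constants and a unique continuation, yielding $\tilde M_5^{+}$ with $b_7=\tfrac56$ and $\tilde M_6^{+}$ with $b_7=\tfrac67$; the dual families $\tilde M_5^{-},\tilde M_6^{-}$ then follow from the involution $\sigma:(b_1,\dots,b_7)\mapsto(-b_7,\dots,-b_1)$ (or, equivalently, arise within the $M_1$- and $M_1^0$-cases at those special parameters for which the shared five-tuple happens to be the head of an $M_5^{-}$- or $M_6^{-}$-module, whose sixth constant is free). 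One must, along the way, not overlook these degenerate coincidences — a continuation that looks like $M_1$ or $M_1^0$ may also be realized by an $M_2$-, $M_5^{\pm}$- or $M_6^{\pm}$-module — but they are exactly the intersections catalogued in Remark~\ref{peresechenie}, and each module so obtained already lies in one of the families listed above.

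The genuinely delicate point — the step I expect to be the main obstacle — is showing that the sporadic families $M_4^{\pm}$ die in dimension $9$, contributing no $\tilde M_4^{\pm}$. Suppose $(b_1,\dots,b_6)\in M_4^{+}$; then its three shared constants $b_2,b_3,b_4$ are pinned to the irrational values $-\tfrac25+\tfrac15\sqrt{19}$, $\tfrac15+\tfrac25\sqrt{19}$, $-\tfrac15+\tfrac25\sqrt{19}$, so the last six constants $(b_2,\dots,b_7)$ open with exactly these three numbers. Running through Lemma~\ref{main_lemma}, such a six-tuple cannot be of type $M_3,M_5^{\pm},M_6^{\pm}$ or $M_1^0$ (wrong arithmetic); its leading constants do satisfy the relation $b_4=b_3-\tfrac25$ characteristic of $M_2$, but the corresponding parameter point lies on the degenerate locus $2x-y+1=0$, on which an $M_2$-reading collapses to $M_4^{\pm}$ — impossible here, since $M_4^{\pm}$ has leading constant $12\pm3\sqrt{19}$, not $-\tfrac25+\tfrac15\sqrt{19}$. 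Hence $(b_2,\dots,b_7)$ must be of type $M_1$, and membership in $M_1$ pins the remaining free parameter of $M_4^{+}$ to the single value recorded in Remark~\ref{peresechenie}, so that the resulting $9$-dimensional module already lies in $\tilde M_1$; the case $M_4^{-}$ follows by $\sigma$. Assembling the cases shows that $\tilde M_1,\tilde M_1^0,\tilde M_2,\tilde M_3,\tilde M_5^{\pm},\tilde M_6^{\pm}$ exhaust the variety, and a final short check — again via $\sigma$ and the intersection loci of Remark~\ref{peresechenie} — confirms that none of these families is contained in another. The real difficulty, I expect, is not any single computation but the bookkeeping needed to be certain that, at every special value of the parameters, one has correctly listed all admissible continuations $b_7$.
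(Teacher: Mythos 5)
Your overall strategy coincides with the paper's: the defining system of a $9$-dimensional module, $\{R^5_1,R^5_2,R^5_3,R^5_4,R^7_1,R^7_2\}$, is exactly the union of the systems of the two $8$-dimensional truncations $(b_1,\dots,b_6)$ and $(b_2,\dots,b_7)$, so the corollary follows by deciding, for each family of Lemma~\ref{main_lemma} containing the first truncation, which values of $b_7$ put the second truncation back into $M$. Your treatment of $M_3$, $M_1$, $M_1^0$, $M_2$, $M_5^{\pm}$, $M_6^{\pm}$ (including the caveat that at special parameters of $M_1$, $M_1^0$ the continuation acquires a free $b_7$, producing $\tilde M_5^-$, $\tilde M_6^-$) is consistent with the paper's (very terse) argument and with the intersections of Remark~\ref{peresechenie}.

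The step you yourself flag as the crux, the elimination of $M_4^{\pm}$, is however argued incorrectly. You exclude an $M_2$-type tail for an $M_4^{+}$ head by claiming that the relevant parameter point lies on the degenerate locus $2x-y+1=0$; but with $x=b_3=\frac15+\frac25\sqrt{19}$, $y=b_4=-\frac15+\frac25\sqrt{19}$ one has $2x-y+1=\frac85+\frac25\sqrt{19}\neq 0$, so this exclusion fails. In fact the $M_2$-continuation is realized: choosing the free parameter of $M_4^{+}$ so that $b_5=b_4-\frac25$, i.e. $t=-1+\frac15\sqrt{19}$, the $M_2$ formulas with these $x,y$ return $b_1'=\frac{5xy-17x+10y+2}{5y-9}=-\frac25+\frac15\sqrt{19}=b_2$ and $b_5'=\frac15\frac{5xy+3x-6}{2x-y+1}=\frac{4\sqrt{19}-13}{5}$, which is precisely the $M_4^{+}$ value of $b_6$ at this $t$; hence $(b_2,\dots,b_7)\in M_2$ for a suitable $b_7$ and a genuine $9$-dimensional module with $M_4^{+}$ head exists. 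The corollary is not threatened, because this module is the $\tilde M_2$ module with $y=-\frac15+\frac25\sqrt{19}$ (one checks, e.g., $b_1=12+3\sqrt{19}$ agrees with the $\tilde M_2$ formula), but your assertions that the tail ``must be of type $M_1$'' and that the free parameter of $M_4^{+}$ is pinned to the single value of Remark~\ref{peresechenie} are wrong: there are two admissible values of $t$, one landing in $\tilde M_1$ and one in $\tilde M_2$. So the $M_4^{\pm}$ case has to be settled by actually solving $R^5_4$ and $R^7_2$ for $b_7$ (or by a correct check of the $M_2$-reading), not by the degenerate-locus argument; as written, that step of your proof would fail even though the final list it is meant to confirm is correct.
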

\begin{proof}
The families $M_4^{\pm}$ do not survive to the dimension $9$,
$\tilde M_2$ became an one-parametric family instead of two-parametric $M_2$. $\tilde M_2$ intersects 
$\tilde M_1$ at
$y={\pm}\frac{2}{5}\sqrt{21}$.

\end{proof}
We already classified $9$-dimensional graded thread $W^+$-modules. Let $V$ be 
a $10$-dimensional  $W^+$-module with the basis $f_1,\dots,f_{10}$.
Consider its quotient module $\hat V=V/\langle f_{10}\rangle$ and its submodule $\tilde V=\langle f_2, \dots, f_{10} \rangle$. Both
of them are $9$-dimensional graded thread modules of the type $(1,1,\dots,1)$ with the defining sets $b_1,\dots, b_7$ and $b_2,\dots,b_8$ respectively.

Only two points
$y={\pm}\frac{2}{5}\sqrt{21}$ from $\tilde M_2$ survive to  dimension $10$. These points are in the intersection $\tilde M_1 \cap \tilde M_2$.

Let $(b_1,b_2,\dots,b_{n-2},b_{n-1})$ be coordinates (defining set) of a $(n+1)$-dimensional graded thread $W^+$-module $V=\langle f_1, f_2, \dots, f_n, f_{n+1}\rangle$ of the type $(1,1,\dots,1)$ then its subsets
$(b_1,b_2,\dots,b_{n-2})$ and $(b_2,\dots,b_{n-2},b_{n-1})$ are coordinates of $n$-dimensional quotient 
$V/\langle f_{n+1} \rangle$ and submodule $\langle f_2, \dots, f_n, f_{n+1}\rangle$ respectively. Both of them are
$n$-dimensional graded thread $W^+$-modules of the type $(1,1,\dots,1)$ and we can apply the induction hypothesis. 

For instance let $(b_2,b_3,\dots,b_{n-2},b_{n-1})=(t,3,\dots,\frac{6}{n-2},\frac{6}{n-1})$ then
$(b_1,t,3,\dots,\frac{6}{n-2})$ have also to be present in the Table below and it is impossible. On the another hand
if $(b_1,b_2,\dots,b_{n-2})=(t,3,\dots,\frac{6}{n-2})$ then we have to find the set $(b_2,\dots,b_{n-1})=(\frac{6}{2},\dots,\frac{6}{n-2},b_{n-1})$ in the Table. It can occure if and only if $b_{n-1}=\frac{6}{n-1}$.

\begin{table}
\caption{Graded thread $W^+$-modules of the type $(1,1,\dots,1), \dim{V}=n+1 \ge 10$.}
\label{type_net_nuley}
\begin{center}
\begin{tabular}{|c|c|c|c|c|c|c|c|}
\hline
&&&&&&&\\[-10pt]
module & $b_1$ & $b_2$ & $\dots$ & $b_i$ & $\dots$ & $b_{n-2}$ & $b_{n-1}$ \\
&&&&&&&\\[-10pt]
\hline
&&&&&&&\\[-10pt]
$\begin{array}{c}V_{\lambda, \mu}(n{+}1),\\
u=\mu{-}3\lambda,\\
v=\mu{-}2\lambda,\\v \ne {-}1,\dots, {-}n, \end{array}$ & $\frac{6(u{+}1)}{(v{+}1)(v{+}2)}$ &
$\frac{6(u{+}2)}{(v{+}2)(v{+}3)}$& $\dots$&
$\frac{6(u{+}i)}{(v{+}i)(v{+}i{+}1)}$ & $\dots$ &
$\frac{6(u{+}n{-}2)}{(v{+}n{-}2)(v{+}n{-}1)}$ &$\frac{6(u{+}n{-}1)}{(v{+}n{-}1)(v{+}n)}$ \\
&&&&&&&\\[-10pt]
\hline
&&&&&&&\\[-10pt]
$C_{1,x}(n{+}1)$ & $x$ & $x$ & $\dots$& $x$ & $\dots$ &
$x$ & $x$ \\
&&&&&&&\\[-10pt]
\hline
&&&&&&&\\[-10pt]
$\begin{array}{c}V_{{-}2,{-}3}^t(n{+}1),\\
t\ne 4 \end{array}$ & $t$ & $\frac{5}{2}$& $\dots$&
$\frac{6(i{+}3)}{(i{+}1)(i{+}2)}$ & $\dots$ &
$\frac{6(n{+}1)}{(n{-}1)n}$ &$\frac{6(n{+}2)}{n(n{+}1)}$ \\
&&&&&&&\\[-10pt]
\hline
&&&&&&&\\[-10pt]
$\begin{array}{c}V_{1,3{-}n}^{t}(n{+}1),\\
t\ne 4 \end{array}$ & $-\frac{6(n{+}2)}{n(n{+}1)}$ &
${-}\frac{6(n{+}1)}{(n{-}1)n}$& $\dots$&
${-}\frac{6(n{-}i)}{(n{-}i{-}2)(n{-}i{-}1)}$ & $\dots$ &
${-}\frac{5}{2}$ &${-}t$ \\
&&&&&&&\\[-10pt]
\hline
&&&&&&&\\[-10pt]
$\begin{array}{c}V_{0,{-}1}^t(n{+}1),\\ t \ne 6\end{array}$ & $t$ & $3$& $\dots$& $\frac{6}{i}$ & $\dots$ &
$\frac{6}{(n{-}2)}$ &$\frac{6}{(n{-}1)}$ \\
&&&&&&&\\[-10pt]
\hline
&&&&&&&\\[-10pt]
$\begin{array}{c}V_{{-}1,{-}2{-}n}^{t}(n{+}1),\\ t \ne 6\end{array}$ & ${-}\frac{6}{(n{-}1)}$ & ${-}\frac{6}{(n{-}2)}$& $\dots$&
${-}\frac{6}{(n{-}i)}$ & $\dots$ &
${-}3$ &${-}t$ \\
&&&&&&&\\
\hline
\end{tabular}
\end{center}
\end{table}
\end{proof}

\section{Graded thread modules of the type $(1,\dots,1,0,1,\dots,1)$.}
Now we consider the case, when one and only one of the defining constants $\alpha_i$
vanishes.
$$\exists ! \;k, 1 \le k \le n, \;\alpha_k=0, \; {\rm i.e.}\; e_1f_k=0, \; e_1f_j \ne 0, j\ne k, 1\le j \le n.$$
A graded module of this type is called "d\'ecousu" in \cite{Benoist}. 

\begin{theorem}
\label{theor_odin_nol} Let $V$ be a $(n{+}1)$-dimensional, $n \ge
16$, indecomposable graded thread $W^+$-module of the type $(1,\dots,1,0,1,\dots,1)$,
i.e. there exists a basis $f_i, i=1,\dots,n{+}1,$ of $V$ and $\exists !
k, 1 \le k \le n$, such that
\begin{eqnarray*}
e_if_j \in \langle f_{i+j}\rangle, i+j \le n+1,\; e_if_j =0, i+j > n+1,\\
e_1f_k=0, \; e_1f_i \ne 0, \;i=1,\dots,k{-}1,k{+}1,\dots,n;
\end{eqnarray*}
Then $V$ is isomorphic to one and only one module from the following list

A) $(n+1)$-dimensional quotients of $V_{\lambda, 2\lambda-k}, \lambda \ne 0, {-}1$, 

\begin{itemize}
\item
$V_{\lambda, 2\lambda-k}(n+1), \lambda \ne 0, {-}1,\; 1 \le k \le n;$
\end{itemize}

B) linear deformations of $(n+1)$-dimensional quotients $V_{{-}1,{-}k{-}2}$ and $V_{0,-k}$
\begin{itemize}
\item
$V_{{-}1,{-}k{-}2}^t(n+1),\; 1 \le k < n-1, \; t \in {\mathbb K};$  defined by
\begin{equation}
\begin{split}
e_if_j=\left\{\begin{array}{l} (j{+}i{-}k{-}1)f_{i+j}, j\ne k{+}1, i{+}j \le n{+}1, \\\quad  0, \quad i{+}j > n{+}1; \end{array} \right., \\  e_if_{k{+}1}=
\left\{\begin{array}{l}i(t(i{-}1){-}i{+}2)f_{i+k+1}, i\le n{-}k, \\
\quad 0,\quad i > n{-}k. \end{array} \right.
\end{split}
\end{equation}
\item
$V_{0,-k}^{t}(n+1),  2 <k \le n, \; t \in {\mathbb K};$ it is dual to the $W^+$-module $V_{{-}1,{-}k{-}2}^{-t}(n+1)$
$$V_{0,-k}^{t}(n+1)=V_{{-}1,{-}k{-}2}^{-t*}(n+1);$$ 
\end{itemize}

C) degenerate cases for $k=1, 2, n{-}1, n$.
\begin{itemize}
\item
$V_{0,{-}2}(n+1), \;k=2;$  
\item
$V_{{-}1,{-}4}(n+1)=V_{0,{-}2}^*(n+1), \;k=n{-}1;$
\item
$\tilde V_{0,{-}1}(n+1),\; k=1,$ defined by 
\begin{equation}
\begin{split}
e_if_j=\left\{ \begin{array}{l} (j{-}1)f_{i+j}, j \ge 2, i{+}j \le n{+}1,\\
0, j \ge 2, i+j > n+1,
\end{array} \right.
 e_if_1=\left\{ \begin{array}{l} (i{-}1)f_{i+1}, 1\le i\le n,\\
0, \; i >n. 
\end{array}\right.;
\end{split}
\end{equation}
\item
$\tilde V_{{-}1,{-}2{-}n}(n+1)= \tilde V_{0,{-}1}^*(n+1),\; k=n;$
\item
$\tilde V_{{-}2,{-}3}(n+1), \;k=1,$ defined by 
\begin{equation}
\begin{split}
e_if_j=\left\{ \begin{array}{l} (j{+}2i{-}1)f_{i+j}, j \ge 2, i{+}j \le n{+}1,\\
0, j \ge 2, i+j > n+1,
\end{array} \right.
 e_if_1=\left\{ \begin{array}{l} (i^3-i)f_{i+1}, 1\le i\le n,\\
0, \; i >n. 
\end{array}\right.;
\end{split}
\end{equation}
\item
$\tilde V_{1,{-}n}(n+1)= \tilde V_{{-}2,{-}3}^*(n+1), \;k=n.$
\end{itemize}

\begin{remark}
1) There is another relation of duality 
$$
V_{\lambda, 2\lambda-k}^*(n+1)=V_{{-}\lambda{-}1, {-}2\lambda{-}2-k}(n+1). 
$$

2) $\tilde V_{0,{-}1}(n+1)= \tilde V_{{-}1,{-}2}(n+1)$, where $\tilde V_{{-}1,{-}2}(n+1)$ is defined by 
\begin{equation}
\begin{split}
e_if_j=\left\{ \begin{array}{l} (j{+}i{-}1)f_{i+j}, j \ge 2, i{+}j \le n{+}1,\\
0, j \ge 2, i+j > n+1,
\end{array} \right.
 e_if_1=\left\{ \begin{array}{l} i(i{-}1)f_{i+1}, 1\le i\le n,\\
0, \; i >n. 
\end{array}\right.
\end{split}
\end{equation}
Undeformed modules are non-isomorphic $V_{0,{-}1}(n+1)\neq V_{{-}1,{-}2}(n+1)$. The module  $V_{0,{-}1}(n+1)$ is decomposable
$V_{0,{-}1}(n+1)=\langle f_1 \rangle\oplus \langle f_2,\dots, f_{n{+}1} \rangle$ and $V_{{-}1,{-}2}(n+1)$ is not. However their $n$-dimensional submodules $\langle f_2,\dots, f_{n{+}1} \rangle$ are isomorphic.
\end{remark}

\end{theorem}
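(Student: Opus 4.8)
The plan is to reduce everything to the classification of type $(1,1,\dots,1)$ modules already obtained in Section~3 (Theorem~\ref{first_main}, Lemma~\ref{main_lemma}, Corollary~\ref{main_graded_9}): a module of type $(1,\dots,1,0,1,\dots,1)$ is assembled from two type $(1,\dots,1)$ ``blocks'' lying to the left and to the right of the single gap at $f_k$, and the task is to understand how the relations $R^5_i$, $R^7_j$ glue the blocks across that gap.

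\emph{Normalisation and the two blocks.} After rescaling the $f_i$ we may assume $e_1f_i=f_{i+1}$ for all $i\ne k$, $e_1f_k=0$, and $e_2f_j=\beta_jf_{j+2}$; for $j\notin\{k-1,k\}$ put $b_j=6\beta_j$, and keep $\beta_{k-1},\beta_k$ as the ``bridge'' constants (the leftover scaling normalises one of them to $1$ when it is nonzero). By the indecomposability criterion recalled before Lemma~\ref{key_lemma}, at least one of $\beta_{k-1},\beta_k$ is nonzero. The quotient $V/\langle f_k,\dots,f_{n+1}\rangle\cong\langle f_1,\dots,f_{k-1}\rangle$ and the submodule $\langle f_{k+1},\dots,f_{n+1}\rangle$ are graded thread $W^+$-modules of type $(1,1,\dots,1)$ of dimensions $k-1$ and $n+1-k$. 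Since $n\ge16$, the larger of the two always has dimension $\ge8$, hence is one of the six families of Section~3; when $9\le k\le n-7$ both blocks are of dimension $\ge8$ and are so classified at once. Note that $e_1f_k=0$ means $\alpha_k=0$, which for a density module forces $\mu=2\lambda-k$; this already accounts for the families $V_{\lambda,2\lambda-k}(n+1)$ of part~A.

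\emph{Gluing.} Fixing the larger block together with its parameters, the finitely many straddling instances — $R^5_i$ with $i\le k\le i+5$ and $R^7_j$ with $j\le k\le j+7$, rewritten in terms of the $b_\ast$ and of $\beta_{k-1},\beta_k$ — form a closed polynomial system for the coordinates of the smaller block and for $\beta_{k-1},\beta_k$; solving it is the heart of the argument. Generically the recursion carried by $R^5_i$ runs through the gap unbroken, so the two blocks are the matching subquotients of one density module $V_{\lambda,2\lambda-k}$ and one lands in part~A. Exactly for $\lambda=0$ and for $\lambda=-1$ a straddling relation collapses to $0\cdot(\ast)=0$ at the index $k$, resp.\ $k+1$, releasing a single free parameter $t$ along the $f_k$-row, resp.\ the $f_{k+1}$-row; this produces the linear deformations $V_{0,-k}^t(n+1)$ and $V_{-1,-k-2}^t(n+1)$ of part~B (with $V_{-1,-k-2}^1(n+1)=V_{-1,-k-2}(n+1)$). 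The duality $V\mapsto V^*$ of Section~2, under which $V_{\lambda,2\lambda-k}(n+1)^*=V_{-\lambda-1,-2\lambda-2-k}(n+1)$ and which reverses the basis, interchanges $\lambda=0$ with $\lambda=-1$; using it one needs to treat only one of the two deformation families directly, and it is also what produces the complementary index ranges $1\le k<n-1$ and $2<k\le n$.

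\emph{Boundary values and completion.} When $k$ is within two of an end the left (resp.\ right) block degenerates to dimension $0$ or $1$, only one of $\beta_{k-1},\beta_k$ can serve as a bridge, and the straddling relations overlap the genuinely low relations $R^5_1,R^5_2,R^7_1$; these finitely many configurations are disposed of by direct computation (Lemma~\ref{main_lemma} handling the remaining block, of dimension $\le8$), and they are exactly what yields the sporadic modules $V_{0,-2}(n+1)$, $\tilde V_{0,-1}(n+1)$, $\tilde V_{-2,-3}(n+1)$ and their duals, together with the boundary members of parts~A and~B. One concludes by checking that the listed modules are pairwise non-isomorphic — a graded isomorphism is a rescaling $f_i\mapsto\gamma_if_i$, so this reduces to comparing the normalised bridge ratio and the block invariants — and that every module in the list does satisfy all the $R^5_i,R^7_j$. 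I expect the gluing step to be the main obstacle: one must track which coefficients in the straddling relations degenerate (this is precisely what separates parts~A, B and the sporadic list) and verify that for $3\le k\le n-2$ no isolated solutions survive beyond those stated; the boundary case analysis, though lengthy, is routine once Lemma~\ref{main_lemma} is in hand.
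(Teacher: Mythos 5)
Your proposal is correct in substance and follows essentially the same route as the paper: reduce to the Section 3 classification of type $(1,1,\dots,1)$ blocks, solve the finitely many relations $R^5_i,R^7_j$ that straddle the zero (where the collapses at $\lambda=0$ and $\lambda=-1$ release the parameter $t$, and the cases $k=1,2,n-1,n$ produce the sporadic modules), and pass to the dual module when the zero sits near the right end. The only differences are organisational: the paper anchors on the $8$/$9$-dimensional window immediately to the right of the zero (Lemmas \ref{odin_nol}, \ref{one_zero}) and propagates the remaining $b_i$ uniquely outward instead of classifying both blocks and then gluing, and for a block of dimension $8$ or $9$ one must use the fuller lists of Lemma \ref{main_lemma} and Corollary \ref{main_graded_9} rather than only the six families of Theorem \ref{first_main} --- which is precisely how the extra solutions such as case 4 of (\ref{odin_noll}) appear and are subsequently eliminated.
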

\begin{proof}
The first example $V_{\lambda, 2\lambda-k}(n+1)$ in the list of graded thread $W^+$-modules from the Theorem is absolutely ovbious, we have for all $j, 1 \le j \le n$
$$
e_1f_j=(j+2\lambda-k-2\lambda)f_{j+1}=(j-k)f_{j+1}.
$$
What other graded thread $W^+$-modules exist of the type $(1,\dots,1,0,1,\dots,1)$?

\begin{lemma}
Let $V$ be a $(n+1)$-dimensional graded thread $W^+$-module of the type  $(1,\dots,1,0,1,\dots,1)$ with $\alpha_k=0, 1 \le k \le n$, i.e. $e_1f_k=0$.
\label{decomposable} 
The corresponding
graded $W^+$-module $V$  is
decomposable in a direct sum of two graded $W^+$-modules:
$$
V=V_1 \oplus V_2, \;\; V_1=\langle f_1,\dots,f_k \rangle,
V_2=\langle f_{k{+}1},\dots,f_{n{+}1} \rangle.
$$
if and only if $\beta_k=\beta_{k{-}1}=0$, i.e. $e_2f_{k-1}=0, e_2f_k=0$.  
We denoted by $f_1,\dots,f_{n{+}1}$ the graded basis of $V$.
\end{lemma}
\begin{proof}
It is evident that the subspace $V_2$ is invariant. On the another
hand $e_1f_k=e_2f_k=e_2f_{k{-}1}=0$. Hence the subspace $V_1$ is invariant with
respect to $e_1, e_2$ and therefore it is invariant with respect
to the entire $W^+$-action.

\end{proof}
Now we have to rewrite
the basic equations (\ref{R-uravnenia}) for a module with $\alpha_k=0$. It is easy to see that
one have to substitute $b_i$ by $\frac{\beta_i}{\alpha_i
\alpha_{i{+}1}}$ in (\ref{R-uravnenia}) and then multiply $R^5_i$
by the product $\alpha_i\alpha_{i+1}\alpha_{i+2}\alpha_{i+3}$ and
$R^7_i$ by $\alpha_i\alpha_{i+1}\dots\alpha_{i+5}$ respectively.
We suppose that $\alpha_i{=}1, \;i \ne k, \;
\alpha_{k}{=}0$. One can meet $\alpha_{k}$ only in the
denominators of $b_{k}$ and $b_{k{-}1}$. Hence the new
equation that involves $b_{k}$ is obtained from the old one
by a very simple procedure: we keep summands only of the
form $b_{k}b_j$ or $b_{k-1}b_l$, for instance if $1 < k
< n-2$ we have
\begin{equation}
\begin{split}
R^5_{k-1}: \quad b_{k{+}2}(b_{k-1}{-}b_{k}){-}b_{k-1}(b_{k{+}1}{-}b_{k{+}2})=b_{k-1}{-}3b_{k},\\
R^7_{k-1}:\quad
b_{k{+}4}(b_{k-1}{-}3b_{k}){-}b_{k-1}(b_{k{+}1}{-}3b_{k{+}2}{+}3b_{k{+}3}{-}b_{k{+}4})=\frac{9}{10}(b_{k-1}{-}5b_{k}).
\end{split}
\end{equation}


It follows from Lemma \ref{decomposable} that for an
indecomposable module $V$ with $\alpha_k=0$ the constants $b_k, b_{k-1}$ can not vanish simultaneously. 

\begin{lemma}
\label{odin_nol} 
Let $V$ be a $9$-dimensional graded thread $W^+$-module
defined by its basis $f_k, f_{k{+}1},\dots, f_{k{+}8},$ and the defining set of relations:
\begin{eqnarray*}
\tilde e_1 f_k=0, \; \tilde e_1 f_i= f_{i{+}1}, \; i=k{+}1,\dots,k{+}7;\\
\tilde e_2 f_j=b_j f_{j+2}, \quad j=k,\dots,k{+}6.
\end{eqnarray*}
Then $V$ is 
either decomposable as a direct sum of $W^+$-modules $\langle f_k \rangle \oplus \langle f_{k{+}1},\dots, f_{k{+}8}\rangle$ ($b_k = 0$) or it is indecomposable  ($b_k \ne 0$) and $V$ isomorphic to the one and only one graded thread $W^+$-module with the defining set $(b_k,\dots,b_{k+6})$ from the
table below
\begin{equation}
\label{odin_noll} 
\begin{tabular}{|c|c|c|c|c|c|c|c|}
\hline
&&&&&&&\\[-10pt]
module & $b_k$ & $b_{k{+}1}$ & $b_{k{+}2}$ & $b_{k{+}3}$ & $b_{k{+}4}$ & $b_{k{+}5}$ & $b_{k{+}6}$  \\
&&&&&&&\\[-10pt]
\hline
&&&&&&&\\[-10pt]
$\begin{array}{c} 1\\ u \neq 1\end{array}$ & $*$ & $\frac{6(u{+}1)}{1\cdot2}$ &
$\frac{6(u{+}2)}{2\cdot3}$ & $\frac{6(u{+}3)}{3\cdot4}$&
$\frac{6(u{+}4)}{4\cdot5}$& $\frac{6(u{+}5)}{5\cdot6}$ &
$\frac{6(u{+}6)}{6\cdot7}$  \\
&&&&&&&\\[-10pt]
\hline
&&&&&&&\\[-10pt]
$2$ & $*$ & $t$ & $\frac{6}{2}$ & $\frac{6}{3}$& $\frac{6}{4}$ &
$\frac{6}{5}$ & $\frac{6}{6}$\\
&&&&&&&\\[-10pt]
\hline
&&&&&&&\\[-10pt]
$3$ & $*$ & $\frac{5}{2}$ & $\frac{9}{5}$ &$\frac{7}{5}$ & $\frac{8}{7}$& $\frac{27}{28}$ & $\frac{5}{6}$ \\
&&&&&&&\\[-10pt]
\hline
&&&&&&&\\[-10pt]
$4$ & $*$ & $1$ & $\frac{9}{5}$ &$\frac{7}{5}$ & $1$& $\frac{3}{4}$ & $\frac{17}{28}$ \\
&&&&&&&\\
\hline
\end{tabular}
\end{equation}
\end{lemma}
\begin{proof}
Consider two equations with $b_k$.
\begin{equation}
\label{9dim}
\begin{split}
R^5_k: \quad b_k(2b_{k{+}3}{-}b_{k{+}2}{-}1)=0,\\
R^7_k:\quad
b_k\left(2b_{k{+}5}{-}b_{k{+}2}{+}3b_{k{+}3}{-}3b_{k{+}4}{-}\frac{9}{10}\right)=0.
\end{split}
\end{equation}
Remark that if $b_k=0$ then for the first basis vector $f_k$ we have
$$
e_1f_k=0, e_2f_k=0,
$$ 
and hence the one-dimensional subspace $\langle f_k \rangle$ is invariant with respect to 
the entire $W^+$-action.

If $b_k \ne 0$ we have two linear equations (\ref{9dim}) on $b_{k{+}2}, b_{k{+}3},
b_{k{+}4}, b_{k{+}5}$. 

Now we can apply the description of $8$-dimensional graded thread modules from the Theorem \ref{main_graded_answer}. We consider the submodule $\langle f_{k{+}1}, \dots, f_{k{+}8}\rangle$ of $V$ as a $8$-dimensional  graded thread module of the type $(1,1,\dots,1)$.

1) Coordinates $b_{k{+}i}=\frac{6(u{+}i)}{(v{+}i)(v{+}i{+}1)}, i=1,\dots,6,$ of a point $P(u,v) \in M_1$
 satisfy both equations $R^5_k$ and $R^7_k$ if and only if
\begin{itemize}
\item $u{=}4, v{=}2$, i.e. $b_{k{+}1}{=}\frac{5}{2}, b_{k{+}2}{=}\frac{9}{5}, b_{k{+}3}{=}\frac{7}{5}, b_{k{+}4}{=}\frac{8}{7},
 b_{k{+}5}{=}\frac{27}{28}, b_{k{+}6}{=}\frac{5}{6}$;
\item $v{=}0$, i.e. $b_{k{+}i}{=}\frac{6(u{+}i)}{i(i{+}1)}, \;
i=1,\dots,6, \; u\ne 0,1$.
\end{itemize}

2) Coordinates $b_{k{+}i}{=}\frac{6}{(v{+}i)}$ of a point $P(v) \in M_1^0$
 satisfy $R^5_k$ and $R^7_k$ also in two cases
\begin{itemize}
\item $v=0$, i.e. $b_{k{+}1}{=}6, b_{k{+}2}{=}3, b_{k{+}3}{=}2, b_{k{+}4}{=}\frac{3}{2}, b_{k{+}5}{=}\frac{6}{5}, b_{k{+}6}{=}1$;
\item $v=1$, i.e. $b_{k{+}1}{=}3, b_{k{+}2}{=}2, b_{k{+}3}{=}\frac{3}{2}, b_{k{+}4}{=}\frac{6}{5}, b_{k{+}5}{=}1, b_{k{+}6}{=}\frac{6}{7}$.
\end{itemize}
Hence we have to remove the restriction $u \ne 0,1$ in the first line of our table.

3) There are only two points in
$M_2$ satisfying $R^5_k$ and $R^7_k$:
\begin{itemize}
\item $x{=}\frac{5}{2}, y{=}\frac{7}{4}$, i.e. $b_{k{+}1}{=}\frac{9}{2}, b_{k{+}2}{=}\frac{5}{2}, b_{k{+}3}{=}\frac{7}{4}, b_{k{+}4}{=}\frac{27}{20},
b_{k{+}5}{=}\frac{11}{20}, b_{k{+}6}{=}\frac{13}{14}$;
\item $x{=}\frac{9}{5}, y{=}\frac{7}{5}$, i.e. $b_{k{+}1}{=}1, b_{k{+}2}{=}\frac{9}{5}, b_{k{+}3}{=}\frac{7}{5}, b_{k{+}4}{=}1,
 b_{k{+}5}{=}\frac{3}{4}, b_{k{+}6}{=}\frac{17}{28}$.
\end{itemize}

But the point with parameters $x{=}\frac{5}{2}, y{=}\frac{7}{4}$ coincides with
the point $P(\frac{1}{2},0) \in M_1$ with $u{=}\frac{1}{2}, v{=}0$
that we have already considered above. 

4) Coordinates $(b_{k{+}1},b_{k{+}2},\dots, b_{k{+}6})$ of a point $P(t) \in M_3$
satisfies $R^5_k$ only if $t=1$. However $P(1)$ does not satisfy
$R^7_k$. 

5) It is easy to verify directly
that there is no point in the subsets $M_4^{\pm}$, $M_5^{\pm}$,$M_6^-$
that satisfies the equation $R^5_k$.  

6) All points from $M_6^+$ satisfy both equations (\ref{9dim}). 
However it does not hold for points from $M_6^-$.

\end{proof}

\begin{corollary}
\label{odin_nol_corollary}
Let $V$ be a idecomposable $(n+1)$-dimensional graded thread $W^+$-module
defined by its basis $f_1, f_2,\dots, f_{n{+}1}, n+1\ge 10$ and the defining set of relations:
\begin{eqnarray*}
\tilde e_1 f_1=0, \; \tilde e_1 f_i= f_{i{+}1}, \; i=1,\dots,n;\\
\tilde e_2 f_j=b_j f_{j+2}, \quad j=1,\dots,n{-}1.
\end{eqnarray*}
then $V$ is isomorphic to the one and only one $W^+$-module from the table below
$$
\begin{tabular}{|c|c|c|c|c|c|c|c|}
\hline
&&&&&&&\\[-10pt]
module & $b_1$ & $b_2$ & $b_3$ & $\dots$ & $b_{i{+}1}$ &  $\dots$ & $b_{n{-}1}$  \\
&&&&&&&\\[-10pt]
\hline
&&&&&&&\\[-10pt]
$\begin{array}{c}V_{\lambda,{-}1{+}2\lambda},\\ \lambda \ne 0,{-}1\end{array}$ & ${-}6\lambda$ & $\frac{6({-}\lambda{+}1)}{1\cdot2}$ &
$\frac{6(-\lambda{+}2)}{2\cdot3}$ & $\dots$& $\frac{6(-\lambda{+}i)}{i(i{+}1)}$&
$\dots$ &
$\frac{6(-\lambda{+}n{-}2)}{(n{-}2)(n{-}1)}$  \\
&&&&&&&\\[-10pt]
\hline
&&&&&&&\\[-10pt]
$\tilde V_{0,{-}1}$ & $6$ & $\frac{6}{2}$ &
$\frac{6}{3}$ & $\dots$& $\frac{6}{i{+}1}$&
$\dots$ &
$\frac{6}{n{-}1}$  \\
&&&&&&&\\[-10pt]
\hline
&&&&&&&\\[-10pt]
$V_{{-}1,{-}3}^t$ & $6$ & $t$ & $3$ & $\dots$& $\frac{6}{i}$ &
$\dots$ & $\frac{6}{n{-}2}$\\
&&&&&&&\\[-10pt]
\hline
&&&&&&&\\[-10pt]
$\tilde V_{{-}2,{-}3}$ & $6$ & $\frac{5}{2}$ & $\frac{9}{5}$ &$\dots$ & $\frac{6(4{+}i)}{(2{+}i)(3{+}i)}$& $\dots$ & $\frac{6(n{+}2)}{n(n{+}1)}$ \\
&&&&&&&\\
\hline
\end{tabular}
$$
\end{corollary}
\begin{proof}
We rescaled the first vector $f_1$ in order to fix the value of $b_1$ (we recall that $b_1$ is not determined by equations $R_1^5$ and $R^7_1$). It is possible because $e_1f_1=0$. 

We can conclude that if $b_k \ne 0$ then 
$b_{k{+}1},\dots,b_{k{+}6}$ have the values prescribed by 
Lemma \ref{9dim}. It follows from Theorem \ref{first_main}
that $b_{k{+}i}, 1 \le i \le s$  if $7 \le s\le n-9$ is
determined uniquely for all three subcases of (\ref{odin_noll})
$$
1)\;  b_{k{+}i}{=}\frac{6(u{+}i)}{i(i{+}1)}; \quad 2)\;
b_{k{+}i}{=}\frac{6}{i}, \; ;\quad 3)\;
b_{k{+}i}{=}\frac{6(i{+}4)}{(i{+}2)(i{+}3)}.
$$

However for the subcase
$$
4) \;\; b_k{=}1, \;b_{k{+}1}{=}1,  b_{k{+}2}{=}\frac{9}{5},
\;b_{k{+}3}{=}\frac{7}{5}, \;b_{k{+}4}{=}1,
\;b_{k{+}5}{=}\frac{3}{4}, \;b_{k{+}6}{=}\frac{17}{28}.
$$
the defining set can not be extended to a system $\{b_{k+1},\dots,b_{k{+}7}\}$.
It follows from the fact that 4) corresponds to the point 
$\left(1,\frac{9}{5},\frac{7}{5},1,\frac{3}{4},\frac{17}{28}\right)$ of the
subset $M_2$ defined in Theorem \ref{first_main} by parameters
$x{=}y{+}\frac{2}{5}{=}\frac{9}{5}, y{=}\frac{7}{5}$. This set  can not
be extended to $\left(1,\frac{9}{5},\frac{7}{5},1,\frac{3}{4},\frac{17}{28}, b_{k+7}\right)$. One can also very it directly considering the equations $R^5_{k{+}7}, R^7_{k{+}7}$.

1) For convenience of notations we denote by $V_{{-}1,{-}3}^t$ a linear deformation of $V_{{-}1,{-}3}$ ($t$ is a parameter). It is defined by the formulas
\begin{eqnarray*}
e_if_j=(j+i-2)f_{i+j}, j\ne 2, i+j \le n+1,\\
 e_if_2=i(t(i-1)-i+2)f_{i+2}, i\le n-1.
\end{eqnarray*}

2) The $W^+$-module $\tilde V_{0,{-}1}$ is also deformed $W^+$-module $V_{0,{-}1}$. It is defined by
\begin{eqnarray*}
e_if_j=(j+i+1)f_{i+j}, j \ge 2, i+j \le n+1, \\ \; e_if_1=(i{-}1)f_{i+1}, 1 \le i \le  n.
\end{eqnarray*}

3) The  $W^+$-module  $\tilde V_{{-}2,{-}3}$ is defined by
\begin{eqnarray*}
e_if_j=(j+2i-1)f_{i+j}, j \ge 2, i+j \le n+1, \\ e_if_1=(i^3-i)f_{i+1}, 1\le i\le n.
\end{eqnarray*}
We also substituted for convinience $u=-\lambda$.

\end{proof}

\begin{lemma}
\label{one_zero}
Let $V$ be a $k+8$-dimensional graded thread $W^+$-module
defined by its basis $f_1,\dots,f_k, f_{k{+}1},\dots, f_{k{+}8}, 2\le k \le 8,$ and the defining set of relations:
\begin{eqnarray*}
\tilde e_1 f_k=0, \; \tilde e_1 f_i= f_{i{+}1}, \; i=1,\dots, k{-}1, k{+}1,\dots,k{+}7;\\
\tilde e_2 f_j=b_j f_{j+2}, \quad j=1,\dots,k{+}6.
\end{eqnarray*}
Then $V$ is 
either decomposable as a direct sum of $W^+$-modules $\langle f_1,\dots, f_k \rangle \oplus \langle f_{k{+}1},\dots, f_{k{+}8}\rangle$ ($b_k = 0$) or it is indecomposable  ($b_k \ne 0$) and $V$ isomorphic to the one and only one graded thread $W^+$-module with the defining set $(b_1,\dots,b_{k+6})$ from the
table below
\begin{equation}
\label{table_k+8}
\begin{tabular}{|c|c|c|c|c|c|c|c|c|c|c|}
\hline
&&&&&&&&&&\\[-10pt]
module &$b_{k-i}$ & $\dots$ & $b_{k-2}$&$b_{k{-}1}$ & $b_k$ & $b_{k{+}1}$ & $\dots$ & $b_{k{+}i}$ & $\dots$ &  $b_{k{+}6}$  \\
&&&&&&&&&&\\[-10pt]
\hline
&&&&&&&&&&\\[-10pt]
$1$& $\frac{6(u-i)}{i(i-1)}$& $\dots$&$\frac{6(u{-}2)}{1\cdot2}$& $-6(u{-}1)$ & $6u$ & $\frac{6(u{+}1)}{1\cdot2}$ &
$\dots$ & $\frac{6(u{+}i)}{i(i{+}1)}$&
$\dots$ &
$\frac{6(u{+}6)}{6\cdot7}$  \\
&&&&&&&&&&\\[-10pt]
\hline
&&&&&&&&&&\\[-10pt]
$2$&${-}\frac{6}{i}$& $\dots$& ${-}\frac{6}{2}$&$0$ & $6$ & $t$ & $\dots$ & $\frac{6}{i}$& $\dots$ &
 $\frac{6}{6}$\\
&&&&&&&&&&\\
\hline
&&&&&&&&&&\\[-10pt]
$3$&${-}\frac{6}{i-1}$&$\dots$ & $t$ &${-}6$ & $0$ & $\frac{6}{2}$ & $\dots$ & $\frac{6}{i+1}$ &
$\dots$ & $\frac{6}{7}$\\
&&&&&&&&&&\\
\hline
\end{tabular}
\end{equation}
\end{lemma}
\begin{proof}
1) Consider 
equations $R^5_{k{-}1}$ and $R^7_{k{-}1}$
\begin{equation}
\label{equations_k-1}
\begin{split}
R^5_{k{-}1}: \quad b_{k{-}1}(2b_{k{+}2}{-}b_{k{+}1}{-}1)=b_k(b_{k{+}2}{-}3),\\
R^7_{k{-}1}:\quad
b_{k{-}1}\left(2b_{k{+}4}{-}b_{k{+}1}{+}3b_{k{+}2}{-}3b_{k{+}3}{-}\frac{9}{10}\right)=3b_k\left(b_{k{+}4}{-}\frac{3}{2}\right).
\end{split}
\end{equation}

In the subcase 1) from the Table (\ref{odin_noll}) both equations are equivalent to
$${-}b_{k{-}1}u=b_k(u{-}1).$$ 
Hence one can take
$b_k=6u\gamma, \; b_{k{-}1}=-6(u{-}1)\gamma, \; u \ne 0,1, \gamma \ne 0$. If $u=0$ then $b_k=0$, if
$u=1$ then $b_{k-1}=0$. After (if necessary) rescaling of $f_{k}$ we may assume that 
$\gamma=1$.  

In the subcase 2) we have $b_k\ne 0, b_{k{+}1}{=}t, b_{k{+}2}{=}\frac{6}{2}, b_{k{+}3}{=}\frac{6}{3},
b_{k{+}4}{=}\frac{6}{4}$. Then $R^5_{k{-}1}$ and $R^7_{k{-}1}$ are
$$
b_{k{-}1}\left(5-t\right)=0, \quad
b_{k{-}1}\left(\frac{51}{10}-t\right)=0.
$$
That implies $b_{k{-}1}=0$. We set $b_k=6$.

For the subcase 3) in (\ref{odin_noll}) the system $R^5_{k{-}1}$ and $R^7_{k{-}1}$ look
$$
b_{k-1}\frac{1}{10}=-b_k\frac{6}{5}, \quad b_{k-1}\frac{3}{35}=-b_k\frac{15}{14}
$$
and it is inconsistent. Hence the case 3) is not extendable to the left.

The subcase 4) also leeds to a inconsistent system on unknowns $b_{k-1}$ and $b_k$ and also is not extendable.

Now we have to study the case $b_k{=}0$. As $b_{k{-}1}\ne 0$ we remark 
that (\ref{equations_k-1}) is equivalent 
to the following linear system
\begin{equation}
\label{10dim}
\begin{split}
2b_{k{+}2}{-}b_{k{+}1}{-}1=0,\\
2b_{k{+}4}{-}b_{k{+}1}{+}3b_{k{+}2}{-}3b_{k{+}3}{-}\frac{9}{10}=0.
\end{split}
\end{equation}
One can remark that the equations (\ref{10dim}) can be obtained from (\ref{9dim}) just by shifting the index $k \to k-1$. The
mimic of the proof of the Lemma \ref{odin_nol} will give us
the following answer ($b_{k{-}1}$ can take arbitrary values but after rescaling (if necessary) we may assume that $b_{k{-}1}{=}{-}6$).
$$
b_k{=}0, b_{k{+}1}{=}\frac{6}{2}, b_{k{+}2}{=}\frac{6}{3}, b_{k{+}3}{=}\frac{6}{4},
b_{k{+}4}{=}\frac{6}{5}, b_{k{+}5}{=}\frac{6}{6}, b_{k{+}6}{=}\frac{6}{7}.
$$

We summarize our results by means of the following table:
\begin{equation}
\label{table10}
\begin{tabular}{|c|c|c|c|c|c|c|c|c|}
\hline
&&&&&&&&\\[-10pt]
module & $b_{k{-}1}$ & $b_k$ & $b_{k{+}1}$ & $b_{k{+}2}$ & $b_{k{+}3}$ & $b_{k{+}4}$ & $b_{k{+}5}$ & $b_{k{+}6}$  \\
&&&&&&&&\\[-10pt]
\hline
&&&&&&&&\\[-10pt]
$1$ & $-6(u{-}1)$ & $6u$ & $\frac{6(u{+}1)}{1\cdot2}$ &
$\frac{6(u{+}2)}{2\cdot3}$ & $\frac{6(u{+}3)}{3\cdot4}$&
$\frac{6(u{+}4)}{4\cdot5}$& $\frac{6(u{+}5)}{5\cdot6}$ &
$\frac{6(u{+}6)}{6\cdot7}$  \\
&&&&&&&&\\[-10pt]
\hline
&&&&&&&&\\[-10pt]
$2$ & $0$ & $6$ & $t$ & $3$ & $2$& $\frac{3}{2}$ &
$\frac{6}{5}$ & $1$\\
&&&&&&&&\\
\hline
&&&&&&&&\\[-10pt]
$3$ & ${-}6$ & $0$ & $3$ & $2$ & $\frac{3}{2}$ &
$\frac{6}{5}$ & $1$ &$\frac{6}{7}$\\
&&&&&&&&\\
\hline
\end{tabular}
\end{equation}

2) Suppose now that
$k \ge 3$. Hence we may consider 
equations $R^5_{k{-}2}$ and $R^7_{k{-}2}$
\begin{equation}
\label{eq_k-2}
\begin{split}
R^5_{k{-}2}: \quad b_{k{-}2}b_k=3b_{k{-}1}{-}3b_k{-}b_{k{-}1}b_{k{+}1},\\
R^7_{k{-}2}:\quad
b_{k{-}2}b_k=\frac{9}{2}b_{k{-}1}{-}9b_k{-}3b_{k{-}1}b_{k{+}3}{+}3b_kb_{k{+}3}.
\end{split}
\end{equation}

For the module 1) in (\ref{table10}) the equations  (\ref{eq_k-2}) are equivalent to one equation 
$$b_{k{-}2}u=3u(u{-}2).$$ 
Hence we  take
$b_{k{-}2}=3(u{-}2)=\frac{6(u{-}2)}{1\cdot2}$ if $u \ne 0$.

For the module 2) in (\ref{table10})  our equations  imply $b_{k{-}2}={-}3$. 

In the subcase 3) taking into account  $b_k{=}0$ we conclude that  all values
of $b_{k{-}2}$ are valid and we set $b_{k{-}2}=t$.

3) Suppose now that $k \ge 4$.  The equations on $b_{k{-}3}$ are the following ones
\begin{equation}
\begin{split}
R^5_{k{-}3}: \quad b_{k{-}3}(2b_k{-}b_{k{-}1})=b_{k{-}2}b_k{+}3b_{k{-}1}{-}b_k,\\
R^7_{k{-}3}:\quad
b_{k{-}3}(b_{k{-}1}-3b_k)=b_{k{+}2}(3b_{k{-}1}{-}b_k){-}9b_{k{-}1}{+}9b_k,
\end{split}
\end{equation}
For the first module 1) they
are equivalent to
$$
b_{k{-}3}(3u{-}1)=(3u{-}1)(u{-}3), \quad b_{k{-}3}({-}4u{+}1)=({-}4u{+}1)(u{-}3).
$$
Hence $b_{k{-}3}=u{-}3$. For the second module 2)  both equations are equivalent to $b_{k{-}3}={-}2$.
For the module 3) we have $b_{k{-}3}={-}3$.

\begin{table}
\caption{Graded thread $W^+$-modules of the type $(1,\dots,1,0,1\dots,1), \dim{V}=n+1 \ge 16$.}
\label{type_odin_nul}
\begin{tabular}{|c|c|c|c|c|c|c|c|c|c|}
\hline
&&&&&&&&&\\[-10pt]
module & $b_{1}$& $b_2$ & $\dots$& $b_{k{-}2}$ & $b_{k{-}1}$ & $b_{k}$ 
  & $b_{k{+}1}$ & $\dots$ &$b_{n{-}1}$  \\
&&&&&&&&&\\[-10pt]
\hline
&&&&&&&&&\\[-10pt]
$\begin{array}{c} V_{\lambda, 2\lambda-k}(n+1),\\ \lambda \ne0,{-}1,\\
2<k<n{-}1\end{array}$ & $\frac{6({-}\lambda{-}k{+}1)}{(k{-}1)(k{-}2)}$ & $\frac{6({-}\lambda{-}k{+}2)}{(k{-}2)(k{-}3)}$& $\dots$
&$\frac{6({-}\lambda{-}2)}{2\cdot 1}$ & $6({-}\lambda{-}1)$& ${-}6\lambda$ &
$\frac{6({-}\lambda{+}1)}{1\cdot 2}$ &
 $\dots$ & $\frac{6({-}\lambda{+}n{-}1{-}k)}{(n{-}k{-}1)(n{-}k)}$ \\
&&&&&&&&&\\[-10pt]
\hline
&&&&&&&&&\\[-10pt]
$\begin{array}{c} V_{\lambda, 2\lambda-2}(n+1),\\ \lambda \ne  {-}1, k=2 \end{array}$ & $6(\lambda{+}1)$ & ${-}6\lambda$& $\dots$
&$\dots$ & $\frac{6({-}\lambda{+}k{-}3)}{(k{-}3)(k{-}2)}$& $\dots$ &
$\frac{6({-}\lambda{+}k{-}1)}{(k{-}1)k}$ &
 $\dots$ & $\frac{6({-}\lambda{+}n{-}3)}{(n{-}3)(n{-}2)}$ \\
&&&&&&&&&\\[-10pt]
\hline
&&&&&&&&&\\[-10pt]
$\begin{array}{c} V_{\lambda, 2\lambda-1}(n+1),\\ \lambda \ne 0,{-}1, k=1 \end{array}$ & $-6\lambda$ &$\frac{6({-}\lambda{+}1)}{1\cdot2}$ & $\dots$
&$\dots$ & $\frac{6({-}\lambda{+}k{-}2)}{(k{-}2)(k{-}1)}$& $\dots$ &
$\frac{6({-}\lambda{+}k)}{k(k{+}1)}$ &
 $\dots$ & $\frac{6({-}\lambda{+}n{-}2)}{(n{-}2)(n{-}1)}$ \\
&&&&&&&&&\\[-10pt]
\hline
&&&&&&&&&\\[-10pt]
$\begin{array}{c} V_{\lambda, 2\lambda-n}(n+1),\\ \lambda \ne 0,{-}1, k=n \end{array}$ &$\frac{6({-}\lambda{-}n{+}1)}{(n{-}2)(n{-}1)}$  &$\frac{6({-}\lambda{-}n{+}2)}{(n{-}2)(n{-}3)}$ & $\dots$
&$\dots$ & $\frac{6({-}\lambda{-}n{+}k{-}1)}{(n{-}k{+}2)(n{-}k{+}1)}$& $\dots$ &
$\frac{6({-}\lambda{-}n{+}k)}{(n{-}k)(n{-}k{-}1)}$ &
 $\dots$ &$6({-}\lambda{-}1)$  \\
&&&&&&&&&\\[-10pt]
\hline
&&&&&&&&&\\[-10pt]
$\begin{array}{c} V_{\lambda, 2\lambda-n+1}(n+1),\\ \lambda \ne 0, k=n-1 \end{array}$ & $\frac{6({-}\lambda{-}n{+}2)}{(n{-}2)(n{-}3)}$ & $\frac{6({-}\lambda{-}n{+}3)}{(n{-}3)(n{-}4)}$& $\dots$
&$\dots$ & $\frac{6({-}\lambda{-}n{+}k)}{(n{-}k)(n{-}k{-}1)}$& $\dots$ &
$\frac{6({-}\lambda{-}n{+}k{+}2)}{(n{-}k{-}2)(n{-}k{-}3)}$ &
 $\dots$ & $-6\lambda$ \\
&&&&&&&&&\\[-10pt]
\hline
&&&&&&&&&\\[-10pt]
$\begin{array}{c}V_{{-}1,{-}k{-}2}^t(n+1), \\ 2< k <n{-}1\end{array}$ & ${-}\frac{6}{k{-}1}$ & ${-}\frac{6}{k-2}$ & $\dots$ & ${-}\frac{6}{2}$& $0$& $6$ & $t$ &
$\dots$&$\frac{6}{n{-}k{-}1}$ \\
&&&&&&&&&\\[-10pt]
\hline
&&&&&&&&&\\[-10pt]
$\begin{array}{c} V_{{-}1, {-}4}^t(n+1),\\ k=2 \end{array}$ & $0$ & $6$& $\dots$
&$\frac{6}{k{-}4}$& $\frac{6}{k{-}3}$& $\frac{6}{k{-}2}$ &
$\frac{6}{k{-}1}$ &
 $\dots$ & $\frac{6}{n{-}3}$ \\
&&&&&&&&&\\[-10pt]
\hline
&&&&&&&&&\\[-10pt]
$\begin{array}{c}V_{{-}1,{-}3}^{t}(n+1),\\  k=1 \\\end{array}$ & $6$& $t$ & $\dots$ & $\frac{6}{k-3}$& $\frac{6}{k-2}$& $\frac{6}{k-1}$ &
$\frac{6}{k}$ &
$\dots$&$\frac{6}{n-2}$ \\
&&&&&&&&&\\[-10pt]
\hline
&&&&&&&&&\\[-10pt]
$\begin{array}{c}V_{0,-k}^{t}(n+1),\\  2 <k <n-1 \\\end{array}$ & ${-}\frac{6}{(k{-}2)}$& ${-}\frac{6}{(k{-}3)}$ & $\dots$ & $t$& ${-}6$& $0$ &
$\frac{6}{2}$ &
$\dots$&$\frac{6}{n{-}k}$ \\
&&&&&&&&&\\[-10pt]
\hline
&&&&&&&&&\\[-10pt]
$\begin{array}{c}V_{0,-n{+}1}^t(n+1),\\  k=n-1 \\\end{array}$ & ${-}\frac{6}{n-3}$& ${-}\frac{6}{n-2}$ & $\dots$ & ${-}\frac{6}{n{-}k}$& ${-}\frac{6}{n{-}k{-}1}$& ${-}\frac{6}{n{-}k{-}2}$ &
${-}\frac{6}{n{-}k{-}3}$ &
$\dots$&$0$ \\
&&&&&&&&&\\[-10pt]
\hline
&&&&&&&&&\\[-10pt]
$\begin{array}{c}V_{0,-n}^t(n+1),\\  k=n \\\end{array}$ & ${-}\frac{6}{n-2}$& ${-}\frac{6}{n-1}$ & $\dots$ & ${-}\frac{6}{n{-}k{+}1}$& ${-}\frac{6}{n{-}k}$& ${-}\frac{6}{n{-}k{-}1}$ &
${-}\frac{6}{n{-}k{-}2}$ &
$\dots$&${-}6$ \\[-10pt]
&&&&&&&&&\\
\hline
&&&&&&&&&\\[-10pt]
$\begin{array}{c}\tilde V_{0,{-}1}(n+1),\\  k=1 \\\end{array}$ & $6$& $\frac{6}{2}$ & $\dots$ & $\frac{6}{k{-}2}$& $\frac{6}{k{-}1}$& $\frac{6}{k}$ &
$\frac{6}{k{+}1}$ &
$\dots$&$\frac{6}{n{-}1}$ \\[-10pt]
&&&&&&&&&\\
\hline
&&&&&&&&&\\[-10pt]
$\begin{array}{c}\tilde V_{{-}1,{-}2{-}n}(n+1),\\  k=n \\\end{array}$ & ${-}\frac{6}{n{-}1}$& ${-}\frac{6}{n{-}2}$ & $\dots$ & ${-}\frac{6}{n{-}k{+}2}$& ${-}\frac{6}{n{-}k{+}1}$& $\frac{6}{n{-}k}$ &
$\frac{6}{n{-}k{-}1}$ &
$\dots$&${-}6$ \\[-10pt]
&&&&&&&&&\\
\hline
&&&&&&&&&\\[-10pt]
$\begin{array}{c}\tilde V_{{-}2,{-}3}(n+1),\\  k=1 \\\end{array}$ & $6$& $\frac{5}{2}$ & $\dots$ & $\frac{6(1{+}k)}{(k{-}1)k}$& $\frac{6(2{+}k)}{k(k{+}1)}$& $\frac{6(3{+}k)}{(k{+}1)(k{+}2)}$ &
$\frac{6(4{+}k)}{(k{+}2)(k{+}3)}$ &
$\dots$&$\frac{6(n{+}2)}{n(n{+}1)}$ \\[-10pt]
&&&&&&&&&\\
\hline
&&&&&&&&&\\[-10pt]
$\begin{array}{c} \tilde V_{1,{-}n}(n+1),\\  k=n \\\end{array}$ & ${-}\frac{6(n{+}2)}{n(n{+}1)}$& ${-}\frac{6(n{+}1)}{(n{-}1)n}$ & $\dots$ & $\dots$& ${-}\frac{6(n{+}4{-}k)}{(n{+}3{-}k)(n{+}2{-}k)}$& $\dots$ &${-}\frac{6(n{+}2{-}k)}{(n{+}1{-}k)(n{-}k)}$
 &
$\dots$&${-}6$ \\[-10pt]
&&&&&&&&&\\
\hline
\end{tabular}
\end{table}

4) Let $k \ge 5$. The equations on $b_{k{-}4}$ are
\begin{equation}
\begin{split}
R^5_{k{-}4}: \quad b_{k{-}1}(2b_{k{-}4}{-}3b_{k{-}3}{+}1)=0,\\
R^7_{k{-}4}:\quad
b_{k{-}3}(3b_{k{-}1}-3b_k)=b_{k{-}1}b_{k{+}1}{+}\frac{9}{2}b_k{-}9b_{k{-}1},
\end{split}
\end{equation}
they will give us for 1), 2), 3) respectively
$$
1) \; b_{k{-}4}=\frac{u{-}4}{2}, \quad 2) \;b_{k{-}4}={-}\frac{3}{2}, \quad 3) \; b_{k{-}4}={-}2.
$$

5) The remaining cases ($k \ge 6,7$) are treated similarly to the previous ones. 

\end{proof}
Now considering a general case of $(n+1)$-dimensional graded thread $W^+$-module $V$, $n+1 \ge 16$,
with $e_1f_k=0$ we may assume that $k+8 \le n$. If $k > n-8 \ge 7$  we take the dual $W^+$-module $V^*$ instead of $V$. We have for $V^*$
$$(\alpha_1^*,\alpha_2^*,\dots,\alpha_{n-1}^*, \alpha_{n}^*)=({-}\alpha_{n},{-}\alpha_{n-1},\dots, {-}\alpha_2, {-}\alpha_1).$$
It means that $\alpha^*_{n+1-k}=-\alpha_k=0$ and  $(n+1-k)+7 \le 15 \le n$.

Assuming $k+8 \le n$ we can define a
$9$-dimensional subquotient $\tilde V$ of $V$ 
$$
\tilde V =\langle
f_{k},\dots,f_{k+7},f_{k+8},\dots,f_n\rangle /\langle
f_{k+8},\dots,f_n\rangle
$$
and apply Lemmas \ref{odin_nol}, \ref{one_zero} and Corollary \ref{odin_nol_corollary}.

Every graded thread $W^+$-module $\tilde V$ (the set $(b_k,\dots,b_{k+7})$) presented 
in the classification lists of Lemmas \ref{odin_nol}, \ref{one_zero} and Corollary \ref{odin_nol_corollary} can be uniquely extended to 
the graded thread $W^+$-module $V$ ($(b_1,\dots,b_k,\dots,b_{k+8},\dots,b_{n-1})$. 
We remove the restriction $k+8 \le n$ considering their dual modules.

The results of this classification are presented in the Table \ref{type_odin_nul}.

\end{proof}

\section{Graded thread modules of the type $(1,\dots,1,0,0,1,\dots,1)$.}
Now we consider modules with vanishing two consecutive $\alpha_i, \alpha_{i+1}$, i.e. 
\begin{equation}
\label{reprise}
\exists ! \;k, 1\le k \le n-1,\;\alpha_{k}=\alpha_{k{+}1}=0.
\end{equation}
Modules of this type are called  "repris\'e" in \cite{Benoist}

In \cite{Mill} an infinite-dimensional graded thread $W^+$-module
$\tilde V_{gr}$ was constructed, it was defined by its basis $\{f_j, j \in {\mathbb
Z}\}$ and the relations
\begin{equation}
\label{new_module} e_if_j= \left\{\begin{array}{lll}
   jf_{i+j}, & j \ge 0; &\\
   (i+j)f_{i+j}, & i+j \le 0,&j<0;  \\
   f_{i+j},& i+j>0,& j<0.\\
   \end{array} \right .
\end{equation}
It holds  for this module
$
e_1f_{-1}=e_1f_0=0.
$
This module and its finite-dimensional subquotients played the crucial role in the proof of Buchstaber's conjecture on Massey products in Lie algebra cohomology $H^*(W^+, {\mathbb K})$ \cite{Mill}.  It has interesting nature, it is not a module of $V_{\lambda, \mu}$ family or its degeneration or deformation, in some sense  it is the result of "gluing together" of two modules: the quotient of $V_{{-}1,1}$ with a submodule of $V_{0,0}$ and it is unique infinite-dimensional module with the property $\exists ! \;k, \alpha_{k}=\alpha_{k{+}1}=0$ \cite{Mill} .

\begin{theorem}
Let $V$ be a $(n{+}1)$-dimensional, $n{+}1 \ge 11$, indecomposable graded thread
$W^+$-module of the type $(1,\dots,1,0,0,1,\dots,1)$, i.e. there exists a basis
$f_1,\dots,f_{n+1}$ of $V$ and $\exists! k, 1 \le k \le n{-}1$
such that:
\begin{eqnarray*}
\tilde e_1f_i=f_{i+1}, \;i=1,\dots,k{-}1,k{+}2,\dots,n{-}1;\\
\tilde e_1f_{k}=e_1f_{k{+}1}=0, \quad \tilde e_2f_j=b_jf_{j+2}, \quad
j=1,\dots,n{-}2.
\end{eqnarray*}
then if $k \ne 1, n{-}1,$ the module $V$ is isomorphic to one and only one module 
from the list
\begin{itemize}
\item
$R_k, 1 \le k \le n-1,$  defined by its basis $f_1,\dots, f_{n+1}$ and relations
\begin{equation}
\label{new_module} e_if_{j}= \left\{\begin{array}{ll}
   (j-k-1)f_{i+j}, & k+1 \le  j \le n+1, \; i+j \le n+1; \\
   (i+j-k-1)f_{i+j}, & i+j \le k+1, j<k+1;  \\
   f_{i+j},& k+1 <i+j\le n+1, j<k+1;\\
   0, & {\rm otherwise}.\\
   \end{array} \right .
\end{equation}
\item
its dual module $R_k^*, 1 \le k \le n-1$. 
\end{itemize}
\end{theorem}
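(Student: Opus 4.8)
I first check that $R_k$ is genuinely a $W^+$-module of the asserted type. Shift the grading of the infinite-dimensional module $\tilde V_{gr}$ of \cite{Mill} by $k{+}1$, so that its homogeneous vector of degree $j$ is relabelled the vector of degree $j{+}k{+}1$; then the defining formulas for $\tilde V_{gr}$ become, term by term, those for $R_k$. In the re-graded $\tilde V_{gr}$ the sum of the homogeneous components of degree $\ge 1$ is a submodule, and inside it the sum of those of degree $\ge n{+}2$ is again a submodule; $R_k$ is the corresponding subquotient, hence a $W^+$-module, and so is its dual $R_k^*$. One reads off $\tilde e_1f_k=\tilde e_1f_{k+1}=0$ and $\alpha_i\ne 0$ for $i\ne k,k{+}1$. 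Moreover $R_k$ has $k{-}1$ homogeneous components below its pair of zeroes and $n{-}k$ above, whereas $R_k^*$ has $n{-}k{-}1$ below and $k$ above; since a graded isomorphism between thread modules of equal support is degree-preserving and carries the degrees annihilated by $\tilde e_1$ onto one another, $R_k$ and $R_k^*$ are non-isomorphic as graded $W^+$-modules unless $n=2k$. Thus $R_k$ and $R_{n-k}^*$ are exactly the two entries of the list whose double zero sits at $k$.

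\textbf{Reductions.} Let $V$ be as in the statement with $2\le k\le n-2$. An analogue of Lemma~\ref{decomposable} holds: as the only vanishing constants are $\alpha_k=\alpha_{k+1}=0$, the module $V$ can split into two thread blocks only at position $k$ or at $k{+}1$, and these splittings occur exactly when $\beta_{k-1}=\beta_k=0$, respectively $\beta_k=\beta_{k+1}=0$; hence indecomposability forces $\beta_k\ne 0$, or else $\beta_{k-1}\ne 0$ and $\beta_{k+1}\ne 0$. Passing to $V^*$ if necessary (this moves the double zero to $n{-}k,\,n{+}1{-}k$), we may assume $k\le(n{+}1)/2$. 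Then $U=\langle f_{k+1},\dots,f_{n+1}\rangle$ is a submodule carrying a single zero $\tilde e_1f_{k+1}=0$ at its lowest degree — classified, if indecomposable, by Corollary~\ref{odin_nol_corollary} and its supporting lemmas, and, if decomposable ($\beta_{k+1}=0$), by Theorem~\ref{first_main} applied to $\langle f_{k+2},\dots,f_{n+1}\rangle$ — while the quotient $Q=\langle f_1,\dots,f_k\rangle$, of the dual type (one zero at the top), sits on the other side of the pair of zeroes; $V$ is the extension of $Q$ by $U$, glued through $\beta_{k-1},\beta_k$ and, via the relations $R^5_i,R^7_i$, through the neighbouring $\beta_i$.

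\textbf{Classification.} We argue by induction on $\dim V$, with a low-dimensional base case. For $\dim V=n{+}1$ larger than the base, the quotient $V/\langle f_{n+1}\rangle$ and the submodule $\langle f_2,\dots,f_{n+1}\rangle$ are $n$-dimensional graded thread modules with double zeroes at $k,k{+}1$, resp.\ $k{-}1,k$; by the induction hypothesis (together with the boundary cases $k\in\{1,2,n{-}1,n\}$, handled as in Section~4) each lies in the list, and comparing the common defining constants $b_2,\dots,b_{n-2}$ forces all of $b_1,\dots,b_{n-1}$ once one checks the three relations straddling the double zero, leaving only the extension $R_k$ (or $R_{n-k}^*$, if one started from the dual). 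For the base case one normalises $\alpha_i=1$ ($i\ne k,k{+}1$), $\alpha_k=\alpha_{k+1}=0$ and writes out the relations $R^5_i,R^7_i$ of (\ref{R-uravnenia}); because $\tilde e_1$ annihilates $f_k$ and $f_{k+1}$, the six relations $R^5_{k-1},R^7_{k-1},R^5_k,R^7_k,R^5_{k+1},R^7_{k+1}$ collapse to low-degree relations binding $b_{k-1},b_k,b_{k+1}$ to their neighbours — for instance $R^5_k$ becomes $b_k(2b_{k+3}-b_{k+2}-1)=0$ and $R^5_{k+1}$ becomes $b_{k+1}(2b_{k+4}-b_{k+3}-1)=0$, exactly as in Lemma~\ref{odin_nol} — while away from the two zeroes the system is the one already solved in Theorems~\ref{first_main} and~\ref{theor_odin_nol}. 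Feeding the classification of $U$ and of $Q$ into the straddling relations and discarding the branches excluded by the indecomposability constraint of the previous step, one finds that the unique surviving configuration is the one yielding $R_k$ (or $R_{n-k}^*$).

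\textbf{The main obstacle} is this final case analysis. For each of the finitely many one-zero families produced by Corollary~\ref{odin_nol_corollary}, Lemmas~\ref{odin_nol} and~\ref{one_zero}, one must append the two extra basis vectors annihilated by $\tilde e_1$, solve $R^5_{k-1},R^7_{k-1},R^5_k,R^7_k$ for $b_{k-1}$ and $b_k$, and then test $R^7_{k+1}$ for consistency; the difficulty relative to the single-zero Lemma~\ref{one_zero} is that a double zero obstructs propagation on \emph{both} sides, so that several a priori admissible right-hand pieces — among them the decomposable case $\beta_{k+1}=0$ and the sporadic families that survived into Corollary~\ref{odin_nol_corollary} — must each be ruled out in turn, leaving exactly one branch on each side of the zeroes and hence the pair $R_k,\ R_{n-k}^*$.
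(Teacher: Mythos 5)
Your skeleton is the same as the paper's: normalize $\alpha_i=1$ away from the two zeroes, write out the relations $R^5_i,R^7_i$ that straddle the pair $k,k{+}1$, use indecomposability to normalize $b_{k-1},b_k,b_{k+1}$, and let the already-established classifications (type $(1,\dots,1)$ and one zero) propagate the answer away from the zeroes. But the decisive steps are announced rather than performed. The paper's proof consists precisely of: (i) solving the four straddling equations (\ref{dva_nulya}) (resp.\ (\ref{dva_nulya_vnachale}) when $k=1$) in each of the two normalized branches $b_{k-1}=b_k=1$ and $b_{k-1}=0,\ b_k=b_{k+1}=1$, showing each has a unique solution $b_{k+2}=3,\ b_{k+3}=2,\ b_{k+4}=\tfrac32$ (with $b_{k+1}=0$ forced in the first branch); (ii) discarding the competing solution $\bigl(\tfrac95,\tfrac75,\tfrac65,\tfrac{21}{20}\bigr)$ in the $k=1$ case and the sporadic right-hand families of Lemma~\ref{odin_nol} because they do not extend to the required dimension; and (iii) computing the left tail $b_{k-2}=-3,\ b_{k-3}=-2,\ b_{k-4}=-\tfrac32,\dots$ from $R^5_{k-j},R^7_{k-j}$, which is exactly what identifies the module as $R_k$ or $R^*_{n-k-1}$. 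Your text compresses all of (i)--(iii) into ``one finds that the unique surviving configuration is the one yielding $R_k$'', i.e.\ it asserts the theorem's conclusion at the point where the work has to be done; the ``main obstacle'' paragraph correctly locates the difficulty but does not resolve it.

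Two further points. Your decomposability analysis omits the splitting $V=\langle f_1,\dots,f_k,f_{k+2},\dots,f_{n+1}\rangle\oplus\langle f_{k+1}\rangle$, which occurs when $b_{k-1}=b_{k+1}=0$; consequently the constraint you extract from indecomposability ($b_k\ne 0$ \emph{or} $b_{k-1}\ne0$ and $b_{k+1}\ne0$) is weaker than what the normalization needs ($b_k\ne0$ \emph{and} not both $b_{k\pm1}$ zero), and the branch $b_k=0$, $b_{k-1}\ne0\ne b_{k+1}$ is left alive — in the paper it is killed by $R^5_{k-1}$ in (\ref{dva_nulya}), which forces $b_{k-1}b_{k+1}=0$ once $b_k=0$. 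Also, the induction-on-dimension frame you impose creates problems the paper's direct argument avoids: the submodule $\langle f_2,\dots,f_{n+1}\rangle$ and the quotient $V/\langle f_{n+1}\rangle$ of an indecomposable $V$ need not be indecomposable, and the double zero can land at a boundary position excluded by the statement, so the inductive hypothesis cannot be invoked as you state it; the paper instead fixes the position $k$ and determines every $b_i$ directly (rightward via Lemma~\ref{odin_nol} and Corollary~\ref{odin_nol_corollary}, leftward equation by equation). Your existence and distinctness discussion (realizing $R_k$ as a regraded finite subquotient of $\tilde V_{gr}$, and separating $R_k$ from its dual by the position of the annihilated degrees) is fine and slightly more explicit than the paper's.
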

\begin{proof}
The equations that involves $b_k, b_{k{+}1}, b_{k{+}2}$ will be
obtained from the standard ones by a very simple procedure: we will
keep the summands only of the form $b_kb_j$, $b_{k+1}b_l$,
$b_{k+2}b_l$.

1) Let consider the case $k=1$ (and hence the dual module
with $k=n{-}2$). The first four equations
$R^5_1,R^5_2,R^5_3,R^7_1$ are:
\begin{equation}
\label{dva_nulya_vnachale}
\begin{split}
2b_1 b_4{-} b_1 b_3{-}b_1=0,\\
2 b_2 b_5{-} b_2 b_4{-} b_2=0,\\
2 b_3 b_6{-} b_4 b_6{-} b_3b_5{-}(b_3{-}3b_4{+}3b_5{-}b_6)=0,\\
b_6b_1{-}b_1(b_3{-}3b_4{+}3b_5{-}b_6){-}\frac{9}{10} b_1=0.\\
\end{split}
\end{equation}
If $b_1=0$ then there is a decomposition $V=\langle f_1
\rangle\oplus \langle f_2,\dots,f_n \rangle$ in the sum of two
submodules, if $b_2=0$ then $V=\langle f_2 \rangle\oplus \langle
f_1,f_3,\dots,f_n \rangle$ is also the sum of its submodules.
Hence we may assume that $b_1=b_2=1$.

The system (\ref{dva_nulya_vnachale}) with $b_1=b_2=1$ has two
solutions
$$
(b_3,b_4,b_5,b_6)=\left(3,2,\frac{3}{2},\frac{6}{5}\right),\left(\frac{9}{5},\frac{7}{5},\frac{6}{5},\frac{21}{20}
\right).
$$
It follows from the Proposition \ref{odin_nol} that the module
$\langle f_2,f_3,\dots,f_7\rangle$ corresponding to the second
solution can not be extended to $\langle
f_2,f_3,\dots,f_7,\dots,f_n\rangle$ with $n \ge 10$. On the
another hand one can check out that we have the only one module
with 
$$b_1=1, b_2=1, b_3=3, b_4=2,\dots, b_{n{-}2}=\frac{6}{n{-}3}$$
that corresponds to the first solution.

2) Let us suppose now that $2 \le k \le n{-}6$.
 Then the equations
$R^5_{k{-}1},R^5_k,R^5_{k{+}1},R^7_{k{-}1}$ will have the
following form:
\begin{equation}
\label{dva_nulya}
\begin{split}
{-}b_{k} b_{k{+}2}{-} b_{k{-}1} b_{k{+}1}{+}3b_{k}=0,\\
2 b_{k} b_{k{+}3}{-} b_{k} b_{k{+}2}{-} b_{k}=0,\\
2 b_{k{+}1} b_{k{+}4}{-} b_{k{+}1} b_{k{+}3}{-} b_{k{+}1}=0,\\
{-}3 b_k b_{k{+}4}{-}b_{k{-}1} b_{k{+}1}{+}\frac{9}{2} b_k=0.\\
\end{split}
\end{equation}
\begin{proposition}
Let $b_k=0$ or $b_{k{-}1}= b_{k{+}1}=0$ then the module $V$ is
decomposable.
\end{proposition}
\begin{proof}
Indeed it follows from the first equation of the system above that
if $b_k=0$ then $ b_{k{-}1} b_{k{+}1}=0$. In the case of
$b_{k{-}1}=b_k=0$ we have the invariant decomposition:
$$
V=\langle f_1,\dots,f_k \rangle \oplus \langle f_{k{+}1},\dots,
f_{n}\rangle.
$$
If $ b_k=b_{k{+}1}=0$ then $V$ is decomposed in another way:
$$
V=\langle f_1,\dots,f_k, f_{k{+}1} \rangle \oplus \langle
f_{k{+}2},\dots, f_{n}\rangle.
$$
If both $b_{k{-}1}= b_{k{+}1}=0$ then $V$ is also decomposable:
$$
V=\langle f_1,\dots, f_k,  f_{k{+}2}, \dots, f_n\rangle \oplus
\langle f_{k{+}1}\rangle.
$$
\end{proof}
Now after some rescaling of the basic vectors we have to consider
two possibilities:
\begin{itemize}
\item[1)] $ b_{k{-}1}=b_k =1$;
\item[2)] $b_{k{-}1}=0,\;b_k= b_{k{+}1} =1$.
\end{itemize}

In the first case the system (\ref{dva_nulya}) has the only one
solution 
$$b_{k{+}1}{=}0, b_{k{+}2}{=}3, b_{k{+}3}{=}2,
b_{k{+}4}{=}\frac{3}{2}.$$ 

In the second case we also have the
unique solution $ b_{k{+}2}{=}3, b_{k{+}3}{=}2,
b_{k{+}4}{=}\frac{3}{2}$. The components $b_i, i > k{+}4$ are also
determined uniquely as it follows from the Proposition
\ref{odin_nol}. If we want to find $b_{k{-}2}$ we have to suppose
that $k \ge 3$ and to consider two new equations:
\begin{equation}
\begin{split}
b_{k{-}1} b_{k{+}1}{-} b_{k{-}2} b_k{-}3b_k=0,\\
3b_kb_{k{+}3}{-}b_{k{-}2}b_k{-}9b_k=0.\\
\end{split}
\end{equation}
Evidently in both cases ($b_{k{+}1}=0$ or $b_{k{+}1}=1$) we have
the same answer $b_{k{-}2}={-}3$.

Now supposing that $k \ge 4$ we have two new additional equations:
\begin{equation}
\begin{split}
2b_{k{-}3} b_k{-} b_{k{-}2} b_k{+}b_k=0,\\
{-}b_kb_{k{+}2}{+}3b_{k{-}3}b_k{+}9b_k=0.\\
\end{split}
\end{equation}
Again it follows that $b_{k{-}3}={-}2$ in both situations.

Let $k \ge 5$. We have two equations on $b_{k{-}4}$:
\begin{equation}
\begin{split}
2b_{k{-}4} b_{k{-}1}{-} b_{k{-}3} b_{k{-}1}{+}b_{k{-}1}=0,\\
{-}b_{k{+}1}b_{k{-}1}{-}3b_{k{-}4}b_k{-}\frac{9}{2}b_k=0.\\
\end{split}
\end{equation}
In both cases we have $b_{k{-}4}={-}\frac{3}{2}$.

The last case $k \ge 6$ can be treated absolutely similarly. In fact we
have obtained two graded modules:
$$
\begin{tabular}{|c|c|c|c|c|c|c|c|c|c|}
\hline
&&&&&&&&&\\[-10pt]
module & $b_{1}$&$\dots$&$b_{k{-}2}$ & $b_{k{-}1}$ & $b_k$
&
 $b_{k{+}1}$ & $b_{k{+}2}$ & $\dots$ &$b_{n{-}1}$  \\
&&&&&&&&&\\[-10pt]
\hline
&&&&&&&&&\\[-10pt]
 $R_k$ &${-}\frac{6}{k-1}$ & $\dots$ & ${-}3$& $1$& $1$ & $0$ &
$3$ &$\dots$&$\frac{6}{n{-}k{-}1}$ \\
&&&&&&&&&\\[-10pt]
\hline
&&&&&&&&&\\[-10pt]
 $R_{n{-}k{-}1}^*$ &${-}\frac{6}{k-1}$ & $\dots$ & ${-}3$& $0$& $1$ & $1$ &
$3$ &$\dots$&$\frac{6}{n{-}k{-}1}$ \\[-10pt]
&&&&&&&&&\\
\hline
\end{tabular}
$$
The cases $n{-}5 \le k \le n{-}3$ follow from the previous
considerations: one have to take the corresponding dual
module instead of initial one.

\end{proof}


\begin{thebibliography}{A}



\bibitem{BdFIZ}
Bauer, M., Di Francesco,  Ph., Itzykson, C.,  Zuber, J.-B.: Covariant differential equations and singular vectors in Virasoro representations. Nuclear Phys. B. \textbf{362}:3, 515--562 (1991)

\bibitem{ChP}
Chari, V., Pressley, A.: Unitary representations of the Virasoro algebra and a conjecture
of Kac. Compositio Mathematica \textbf{67}, 315--342 (1988)

\bibitem{Benoist}
Benoist, Y.:  Une nilvari\'et\'e  non affine. J. Differential
Geometry. \textbf{41}, 21--52 (1995)

\bibitem{FeFu}
Feigin, B., Fuchs, D.: Homology of the Lie algebras of vector
fields on the line. Funct. Anal. Appl. \textbf{14}:3, 45--60 (1980)
45--60.

\bibitem{FeFuRe}
Feigin, B.L., Fuchs, D.B., Retakh, V.S.: Massey operations in the cohomology of the infinite-dimensional Lie algebra $L_1$. In: Lecture Notes in Math, \textbf{1346}, pp.13--31. Springer-Verlag,
Zentralblatt Berlin (1988)

\bibitem{Fu}
Fuchs, D.: Cohomology of infinite-dimensional Lie algebras.
Consultants Bureau, N.Y., London (1986)

\bibitem{KacR}
Kac, V.G., Raina, A.K.: Highest weight representations of infinite dimensional Lie algebras
Adv. Ser. Math. Phys. 2, (1988)

\bibitem{Kenji_K}
Iohara, K., Y. Koga, Y.: Representation Theory of the Virasoro Algebra,  Springer Monographs in Math., Springer-Verlag, (2010)

\bibitem{KaSa}
Kaplansky, I., Santharoubane, L.J.: Harish Chandra modules over the Virasoro
algebra. Publ. Math. Sci. Res. Inst. \textbf{4}, 217--231 (1987)

\bibitem{MP}
Martin, C., Piard, A.: Indecomposable modules for the Virasoro Lie algebra and
a conjecture of Kac. Commun. Math. Phys. \textbf{137}, 109--132 (1991).

\bibitem{M}
Mathieu, O.: Classification of Harish-Chandra modules over the
Virasoro Lie algebra. Invent. Math. \textbf{107}, 225--234 (1992)

\bibitem{Mill}
Millionshchikov, D.: Algebra of formal vector fields on the line and Buchstaber's conjecture.
 Funct. Anal. Its Appl.  \textbf{43}:4, 264--278 (2009)

\bibitem{Mill2}
Millionshchikov, D.: Virasoro singular vectors.
 Funct. Anal. Its Appl. \textbf{50}:3, 219--224 (2016)

\bibitem{Milnor}
Milnor, J.: On fundamental groups of complete affinely flat manifolds.  Adv. Math.,  \textbf{25}, 178--187 (1977)
\end{thebibliography}
\end{document}